\documentclass{siamart171218}
\usepackage{lipsum}
\usepackage{amsfonts}
\usepackage{amssymb}
\usepackage{graphicx}
\usepackage{dsfont}
\usepackage{subcaption} % for subfigures
\usepackage{epstopdf}
\usepackage{todonotes}
\usepackage{amsopn}
\usepackage{algorithmic}
\usepackage{verbatim}
\ifpdf
  \DeclareGraphicsExtensions{.eps,.pdf,.png,.jpg}
\else
  \DeclareGraphicsExtensions{.eps}
\fi

\newsiamremark{remark}{Remark}
\newsiamremark{notation}{Notation}
\newsiamremark{hypothesis}{Hypothesis}
\newsiamthm{claim}{Claim}
\newsiamthm{NFM}{Nonlinear Forward Model}
\newsiamthm{ip}{Inverse Problem}
\newsiamthm{assumption}{Assumption}
\headers{Uniqueness and reconstructions from 
  intensity correlations}{T. Hohage, M. Karimi, B. Müller}

\title{Holographic X-ray Phase Contrast Imaging with Partial Coherence: Uniqueness and Reconstructions from 
  Intensity Correlations
\thanks{
\funding{This work was supported by Deutsche Forschungsgemeinschaft through Project 432680300 (CRC 1456/C03)}}}

\author{Thorsten Hohage\thanks{Institute for Numerical and Applied Mathematics, Lotzestra{\ss}e 16-18, 37083 University of Göttingen, Germany and Max-Planck Institute for Solar System Research, 37077 Göttingen, Germany (\email{hohage@math.uni-goettingen.de})},\and
Milad Karimi\thanks{Corresponding author. Institute for Numerical and Applied Mathematics, Lotzestra{\ss}e 16-18, 37083 University of Göttingen, Germany  (\email{m.karimi@math.uni-goettingen.de})}, \and Björn Müller\thanks{Max-Planck Institute for Solar System Research, 37077 Göttingen, Germany (\email{muellerb@mps.mpg.de})}}

\usepackage{amsopn}
\newcommand{\mZ}{\mathbb{Z}}

\newcommand{\mK}{\mathbb{K}}

\newcommand{\ff}{\mathfrak{f}}

\newcommand{\cD}{\mathcal{D}}

\newcommand{\cF}{\mathcal{F}}

\newcommand{\cK}{\mathcal{K}}

\newcommand{\cO}{\mathcal{O}}

\DeclareRobustCommand{\rchi}{{\mathpalette\irchi\relax}}
\newcommand{\irchi}[2]{\raisebox{\depth}{$#1\chi$}} % inner command, used by \rchi

\newcommand{\Nfr}{N}

\DeclareMathOperator{\diag}{diag}
\DeclareMathOperator{\supp}{supp}
\DeclareMathOperator{\re}{Re}
\DeclareMathOperator{\im}{Im}
\DeclareMathOperator{\Cov}{Cov}
\DeclareMathOperator{\bCov}{\mathbf{Cov}}

\DeclareMathOperator{\R}{\mathbb{R}}
\DeclareMathOperator{\C}{\mathbb{C}}

\DeclareMathOperator{\OtK}{OtK}
\DeclareMathOperator{\KtO}{KtO}
\DeclareMathOperator{\HS}{\mathcal{HS}}

\DeclareMathOperator{\grad}{\mathbf{grad}}
\DeclareMathOperator*{\argmin}{argmin}

\DeclareMathOperator{\esssupp}{ess\,supp}

\DeclareMathOperator*{\chull}{ch}

\DeclareMathOperator{\Tr}{tr}
\DeclareMathOperator{\Diag}{Diag}

\DeclareMathOperator{\E}{\mathbb{E}}

\DeclareMathOperator{\Pois}{Pois}

\newcommand{\domX}{\mathbb{D}}
\newcommand{\domY}{\mathbb{M}}
\newcommand{\paren}[1]{\left(#1\right)}

\newcommand{\Nset}{\mathbb{N}}
\newcommand{\Rset}{\mathbb{R}}
\newcommand{\D}{\,\mathrm{d}} % d in integrals
\newcommand{\Cset}{\mathbb{C}}
\newcommand{\Var}{\textbf{Var}}
\newcommand{\norm}[1]{\left\| #1 \right\|}

\newcommand{\rank}{r}  %denotes the rank of a matrix
\newcommand{\cov}{\operatorname{cov}}

\newcommand\unnumberedfootnote[1]{{%
  \let\thempfn\relax% Remove footnote number printing mechanism
  \footnotetext{#1}% Add footnote text
}}

\newsiamthm{lemmadefinition}{Lemma and Definition}
\newcommand{\Zset}{\mathbb{Z}}
\makeatletter
\newcommand*{\addFileDependency}[1]{% argument=file name and extension
  \typeout{(#1)}% latexmk will find this if $recorder=0 (however, in that case, it will ignore #1 if it is a .aux or .pdf file etc and it exists! if it doesn't exist, it will appear in the list of dependents regardless)
  \@addtofilelist{#1}% if you want it to appear in \listfiles, not really necessary and latexmk doesn't use this
  \IfFileExists{#1}{}{\typeout{No file #1.}}% latexmk will find this message if #1 doesn't exist (yet)
}
\makeatother

\ifpdf
\hypersetup{
  pdftitle={Holographic X-ray Phase Contrast Imaging with Partial coherence: Uniqueness and reconstructions from 
  intensity correlations},
  pdfauthor={T. Hohage, M. Karimi, B. Müller }
}
\fi

\begin{document}
\maketitle
% REQUIRED
\begin{abstract}
Holographic coherent X-ray imaging enables nanoscale imaging of biological cells and tissues, rendering both phase and absorption contrast, i.e.\ real and imaginary parts of the refractive index. 
Unlike the standard model, which assumes a perfectly coherent incident beam, we consider partial coherence characterized by a known covariance operator. 
In addition, we assume time-resolved intensity measurements, granting access not only to expected intensities but also to their correlations. 
We investigate the information content of these correlations and analytically demonstrate that, under a symmetry-breaking condition on the sample and the illumination area, both phase and absorption contrast can be uniquely recovered in both the full and the linearized models.  
A key challenge in numerical reconstruction is the substantial increase in data dimensionality caused by computing intensity correlations during preprocessing. 
We propose a novel approach that leverages a low-rank assumption on the incident beam’s covariance operator, bypassing explicit correlation computation while still exploiting its full information. Numerical experiments demonstrate its feasibility, yielding accurate simultaneous reconstructions of phase and absorption contrast.
\end{abstract}

% REQUIRED
\begin{keywords}
 X-ray holography, inverse problem, phase retrieval problem, uniqueness, intensity correlations
\end{keywords}

% REQUIRED
\begin{AMS}
78A45, 78A46
\end{AMS}

\section{Introduction}
Holographic X-ray phase contrast computed tomography has become a central tool in biomedical and material sciences~\cite{cloetens1999holotomography,paganin1998noninterferometric,wilkins1996phase}. 
It provides high-resolution 3D images of large volumes of quasi-transparent specimens such as 
biological tissues~\cite{Eckermann2021}.

In this paper, we consider the 2D imaging model
\begin{equation}\label{eq2}
    I_{f}=|\mathcal{D}(e^{f}u)|^{2},
\end{equation}
where $f:\Rset^2 \to \Cset$ is the unknown quantity of interest, 
$u:\Rset^2 \to \Cset$ describes the incident beam, 
$\mathcal{D}$ is a Fresnel-propagator (a unitary operator defined below), 
$|\cdot|^{2}$ has to be understood point-wise, and $I_{f}:\Rset^2\to [0,\infty)$ is the observed intensity. 
In contrast to the standard model, where $u$ is a 
deterministic function describing a spatially perfectly coherent incident beam $\tilde{u}(x_1,x_2,x_3)=u(x_1,x_2) e^{i\kappa x_3}$, we only assume partial coherence and model $u$ as a complex, circularly symmetric, centered  Gaussian process. 

The values $f(x_1,x_2)$ for any $x_1,x_2\in\Rset$ are given by 
line integrals over the perturbation of the refractive index of the sample in the propagation direction $x_3$ --- $\frac{1}{\kappa}\im f$ corresponding to the integrated real part of the refractive index (phase contrast) and $-\frac{1}{\kappa}\re f$ to the imaginary part (absorption). 
Under the projection approximation, valid for optically thin samples, $e^f u$ describes the electromagnetic field in a plane behind the sample. 
For $\mathcal{D}=I$ (intensity measurements directly behind the sample) and for a plane incident wave ($u\equiv 1$), all information on the phase contrast is lost, and only the absorption contrast can be recovered. To retrieve the 
phase of $e^f u$ and thereby phase contrast, the field 
is propagated to another parallel plane using the Fresnel propagator $\mathcal{D}$, derived from  the Helmholtz equation
under the Fresnel approximation. 
This approach, known as propagation-based or inline holographic phase contrast tomography, can be combined with tomographic techniques to recover the full 3D refractive index of the sample.

\subsection*{Motivation}
In practice, the coherence of an incident beam is never 
perfect but always partial, and the simplifying assumption of perfect coherence may lead 
to entirely erroneous or significantly suboptimal reconstruction results.  
An example where models based on perfect coherence 
completely fail is the \emph{self-amplified spontaneous emission} (SASE) process generating \emph{X-ray free electron laser} (XFEL) pulses \cite{hagemann2021single}. Such ultrashort pulses can even enable time-resolved 
imaging at extremely small time scales. 
Another motivation arises from laboratory sources, which exhibit significantly poorer coherence properties compared to synchrotron radiation, yet are indispensable for high-throughput experiments.

If the incident field $u$ is random, then obviously 
the measured intensities $I_{f}$ in \eqref{eq2} are random as well. 
The primary objective of this work is to investigate and utilize the information contained in intensity correlations 
\begin{align}\label{eq:defi_two_point_corr}
\operatorname{cov}[I_{f}](x,y) = \Cov(I_{f}(x),I_{f}(y)),
\end{align}
with a  particular focus on the additional information 
beyond the \emph{mean intensities} $\E[I_{f}(x)] =\sqrt{\operatorname{cov}[I_{f}](x,x)}$ (see eq.~\eqref{eq:E_Var}).  

Let us first consider the situation that 
$u=Z\widehat{u}$ with a complex-valued centered 
random variable $Z$ with finite second moments 
(Gaussianity is not needed here), corresponding to 
perfect spatial coherence. Then 
\[
\operatorname{cov}[I_{f}](x,y) = \operatorname{Var}(|Z|^2) \widehat{I}_{f}(x)\widehat{I}_{f}(y)
=\sqrt{\operatorname{cov}[I_{f}](x,x)\operatorname{cov}[I_{f}](y,y)}, 
\quad \widehat{I}_{f}:=|\mathcal{D}(e^f\widehat{u})|^2,
\]
so in this case, intensity correlations and mean intensities uniquely determine each other and hence contain the same information. 

To access the covariance function $\operatorname{cov}[I_{f}]$ in 
\eqref{eq:defi_two_point_corr}, we assume to 
have time resolved measurements of intensities 
$I_{f,n}=|\mathcal{D}(e^fu_n)|^2$ corresponding to a 
sequence of fields $u_1, u_2,\dots, u_N$ with the 
same distribution as $u$. Such measurements are 
possible with modern detectors exploiting the high sensitivity and rapid readout of two-dimensional photon-counting pixel arrays.
In other words, we 
partition the total photon dose into $\Nfr$ fractions. 
We then have to impose the weak ergodicity assumptions
\begin{align}
\begin{aligned}\label{eq:ergodicity}
    \E[I_{f}]&= \lim_{\Nfr\to\infty} \overline{I}_{f,\Nfr},&& 
\overline{I}_{f,\Nfr}:= \frac{1}{\Nfr }\sum_{n=1}^{\Nfr }I_{f,n},\\
      \operatorname{cov}[I_{f}]&= \lim_{\Nfr\to\infty}\widehat{\operatorname{cov}}_{\Nfr}[I_{f}],&&
      \widehat{\operatorname{cov}}_{\Nfr}[I_{f}]:=
\frac{1}{\Nfr }\sum_{n=1}^{\Nfr }(I_{f,n}-\overline{I}_{f,\Nfr})
(I_{f,n}-\overline{I}_{f,\Nfr})^\top.
  \end{aligned}
\end{align}

 Obviously, \eqref{eq:ergodicity} is satisfied if the samples 
$u_n$ are stochastically independent, which may  
reasonably be assumed for SASE pulses. 

For a coherent beam, i.e., deterministic $u$ in \eqref{eq2}, it is well known that the 
intensity $I_{f}$ cannot be observed directly 
but only a Poisson process with intensity $I_{f}$ describing photon counts. 
If $u$ is a Gaussian process, then 
conditioned on $u$ we also observe a Poisson process. 
The overall distribution of measured photon count 
data is then described by a Cox process, which will 
be considered in our numerical experiments, see \Cref{subsec:Discretization}. 
In such a model $\operatorname{cov}[I_{f}]$ can be recovered in  a joint limit 
where both $\Nfr$  and the number 
of photon counts per frame tend to $\infty$.

\subsection*{Contributions} 
On the theoretical side, we study the uniqueness of 
the inverse problem to reconstruct the projected refractive 
index $f$ from intensity correlations. We identify three 
unavoidable sources of non-uniqueness: the first is a 
global phase shift, and the second are local phase 
shifts by integer multiples of $2\pi$. The third is a 
bit less obvious and is caused by a twisted symmetry of 
$f$. To avoid this last type of non-uniqueness, we impose 
a condition which, roughly speaking, requires the support of 
$f$ to be contained in one half of the illuminated area. 
Then any two $f_1, f_2$ which lead to identical noise-free 
intensity correlation data must satisfy $e^{f_1-f_2}
=e^{ic}$ for some real constant $c$, i.e., $f$ is 
identifiable up to the first two sources of non-uniqueness. We also prove a uniqueness theorem for the linearized problem. 

The main challenge in the reconstruction process is the huge size of the correlations in high-resolution intensity maps. Note that $\operatorname{cov}[I_{f}]$ depends on four 
variables! Such correlations are typically too 
large to be stored and too expensive to be computed
in a pre-processing step. 
We propose a remedy for the case that the covariance operator of the incident beam $u$ has low rank 
which avoids any matrices in the image space of the forward problem. 
The complexity of our algorithm grows linearly 
in the number of pixels of the intensity correlation 
and quadratically in the rank. 

\subsection*{Related works} 
From the huge literature on phase retrieval problems, we only discuss 
some works related near-field holographic X-ray imaging as studied 
in this paper. For perfectly coherent sources (i.e., $u\equiv 1$ in \eqref{eq2}), the phase retrieval problem for general compactly supported samples admits a unique solution, provided at least two independent intensity patterns are measured at different object-to-detector distances \cite{jonas2003phase}. 

The uniqueness of phase contrast imaging with a single intensity measurement has been proven for single material objects if weak absorption and slowly varying phase shifts \cite{turner2004x} or small propagation distances, justifying the transport-to-intensity equation \cite{paganin2002simultaneous,teague1983deterministic} are assumed. 
Moreover, Nugent \textit{et al} \cite{nugent2007x} by referring to the \textit{phase-vortex} counterexample, discussed that two real-valued intensity measurements are not only sufficient but also necessary for unique reconstruction. 
However, Maretzke in \cite{maretzke2015uniqueness} disproved the widely believed existence of ambiguities in single detector near-field phase contrast imaging and  proved with the help of the theory of entire functions  that a compactly supported complex-valued function can be uniquely determined from intensity measurements only at \textit{single} detector distance and illumination wavelength. Stability estimates 
for the linearized problem were derived in 
\cite{MH:17}. 

Intensity correlation data were also studied in  \cite{bardsley2018imaging,bardsley2016kirchhoff} 
for a setup with unknown point scatterers 
where phases can be recovered in a preprocessing step. 
   
\subsection*{Outline} The rest of the paper is organized as follows: \Cref{Forward problem} is devoted to the imaging model and the derivation 
of an explicit formula for the forward operator, 
for which we then compute the Fr\'echet derivative 
and its adjoint in \Cref{Frechet derivative and its adjoint}. 
In \Cref{Linearized and Uniqueness} we discuss sources of non-uniqueness 
and prove the uniqueness result discussed above. Finally, our numerical algorithm dealing 
with  a realistic noise model based on a Cox process
is presented and tested on synthetic data in \Cref{Reconstructions}. 
Moreover, two short appendices are devoted to some supplementary issues.

\section{Forward problem}\label{Forward problem}
We will derive our theoretical results in arbitrary space dimensions $m\in \Nset$. 
The \emph{Fresnel transform} $\mathcal{D}:L^{2}(\R^{m})\longrightarrow L^{2}(\R^{m})$ can be defined by 
\begin{equation}
\label{Fresnel op}
    (\mathcal{D}f)(x):=\int_{\R^{m}}k_{\mathfrak f}(x-y)f(y)\,\mathrm{d}y,~~~\text{for all}~x\in\R^{m}
\end{equation}
(see, e.g., \cite{maretzke2018locality}). 
Here the convolution kernel $k_{\mathfrak f}:=c_m(\frac{\mathfrak f}{2\pi})^{m/2}n_{\mathfrak f}$ is given by the \emph{chirp function} 
\begin{equation}
    n_{\mathfrak f}(x):=e^{i\mathfrak f\frac{|x|^{2}}{2}}
\end{equation}
with the dimensionless \textit{Fresnel number} $\mathfrak{f}>0$ and the constant $c_m:=e^{\frac{-im\pi}{4}}$. 
There exists the following alternative expression for the Fresnel propagator, which will prove valuable in the following: 
\begin{equation}\label{Fresnel op-Alter}
   (\mathcal{D}f)(x)=c_m\mathfrak{f}^{\frac{m}{2}}n_{\mathfrak{f}}(x)\cdot\mathcal{F}(n_{\mathfrak{f}}\cdot f)(\mathfrak{f}x).
\end{equation}
Here $\mathcal{F}:L^{2}(\R^{m})\longrightarrow L^{2}(\R^{m})$ 
denotes the Fourier transform with the convention 
\begin{equation}\label{FT}
    (\mathcal{F}f)(\xi):=\frac{1}{(2\pi)^{m/2}}\int_{\R^{m}}f(x)e^{-i\xi\cdot x}\D x, ~~~~~\xi\in\R^{m}.
\end{equation}
With this definition the Fourier transform is unitary (see, e.g., \cite{reed1975ii}), which 
also implies that $\mathcal{D}$ is unitary. 

We assume that the object plane (in dimension $m=2$) can be restricted 
to an open, bounded set $\domX\subset \Rset^m$. This may be the ``illuminated 
area''
\begin{equation}\label{object domain}
    \domX:=\{x\in\mathbb{R}^m~:~\operatorname{cov}[u](x,x)>0\}.
\end{equation}
From \eqref{eq2} it is obvious that no information on $f$ is available 
in the non-illuminated region $\Rset^m \setminus \domX$ where $u=0$ almost surely. 
As mentioned in the introduction, $u$ is assumed to be a circularly symmetric Gaussian process with a known covariance $\cov[u]$. 
To mitigate the requirement that $\cov[u]$ is known on $\domX \times \domX$, a pinhole may be placed in the object plane. In such a setting $\domX$ 
describes the shape of the pinhole, which of course should be chosen large enough to contain the support of $f$.

Recall that circular symmetry of $u$ means that the distribution of $e^{i\alpha}u$ is independent of $\alpha\in\Rset$ and that 
the covariance operator $\bCov[u]$ of the random process $u$ can be defined implicitly by $\langle \bCov[u]\varphi_{1},\varphi_{2}\rangle:=\Cov(\langle u,\varphi_{1}\rangle,\langle u,\varphi_{2}\rangle)$ for all $\varphi_{1},\varphi_{2}\in L^2(\domX)$. 
%where we use the notation $\langle u, \varphi \rangle:=u(\varphi)$.
For a function $g\in L_{\Cset}^{\infty}(\domX)$ we define 
the  multiplication operator 
$M_{g}\in \mathcal{B}(L^2(\domX))$ by $M_{g}\varphi:=g \cdot \varphi$ for $\varphi\in L^2(\domX)$ and recall the definition of the Fresnel propagator 
$\mathcal{D}$ in \eqref{Fresnel op}. 
It is straightforward to see that $v_{f}:=\mathcal{D} M_{e^f}u$ is again a circularly symmetric Gaussian process with covariance operator 
\begin{equation}\label{eq: corr}
    \bCov[v_{f}]:=\bCov[\mathcal{D}M_{e^f}u]=\mathcal{D}M_{e^f} \bCov[u] M^{\ast}_{e^f} \mathcal{D}^{\ast}.
\end{equation}

Recall that a compact linear operator $\mathcal{K}$ on a Hilbert space $\mathbb{X}$ is called a 
\emph{Hilbert-Schmidt operator} if its singular values $\sigma_n(\mathcal{K})$, $n\in\Nset$ are 
square summable and that the set $\HS(\mathbb{X})$ of Hilbert-Schmidt operators on $\mathbb{X}$ equipped 
with the norm $\|\cK\|_{\HS}^2=\sum_{n=1}^{\infty}\sigma_n(K)^2$ is a Hilbert space. 
For the special case $\mathbb{X} = L^2(\domY)$ the \emph{kernel-to-operator} map defined by
\begin{align}\label{eq: HSIO}
\begin{aligned}
&\KtO:L^{2}(\domY\times\domY)\longrightarrow\HS(L^{2}(\domY)),\\
&(\KtO[k]\varphi)(x):=\int_{\domX}k(x,y)\varphi(y)\,\mathrm{d}y,~~ \text{for }~x\in\domY,~
\varphi\in L^2(\domY),~k\in L^{2}(\domY\times\domY)
\end{aligned}
\end{align}
is unitary (\cite[Thm. VI.18]{Reed1980}), so its inverse, the \emph{operator-to-kernel map} is given by 
\[
\OtK:=\KtO^{-1}=\KtO^{\ast}:\HS(L^2(\domY))\to L^2(\domY\times \domY). 
\]

\begin{assumption}
\label{ass:HilbertSchmidt}
    The covariance operator $\bCov[u]$ is a Hilbert-Schmidt operator on $L^{2}(\domX)$ with integral kernel $\operatorname{cov}[u] \in C(\domX\times\domX)$.
\end{assumption}

Using the identity $\Cov(|X|^2,|Y|^2)=|\Cov(X,Y)|^2$ 
for circularly symmetric complex Gaussian variables 
$X,Y$ (see \eqref{eq:Cov_circ_symm}) with $X:=v_{f}(x)$ and $Y:=v_{f}(y)$, we obtain the following relation between the two-point 
intensity correlations $\operatorname{cov}[I_{f}]$ in \eqref{eq:defi_two_point_corr} and the 
two-point correlations $\operatorname{cov}[v_{f}]$ of the phased fields $v_{f}$:
\[
\operatorname{cov}[I_{f}](x,y) = \big|\operatorname{cov}[v_{f}](x,y)\big|^2\quad 
\text{with}\quad\operatorname{cov}[v_{f}](x,y):=\Cov(v_{f}(x),v_{f}(y)).
\]
We also introduce an open, bounded subset $\domY\subset \Rset^m$ in the 
observation plane (again for $m=2$) in which intensity data are available.  
Using the previous identity and equations  \eqref{eq2}, \eqref{eq:defi_two_point_corr}, and \eqref{eq: corr}, the forward operator 
\begin{subequations}\label{par-to-ker map}
\begin{equation}
    F:L_{\Cset}^{\infty}(\domX)\longrightarrow L_{\Rset}^{1}(\domY\times\domY),~~~ F(f):=\operatorname{cov}[I_{f}]
\end{equation}
admits the following explicit representation:
\begin{equation}\label{eq:Nonlinear-Fourier-PRP}
F(f)= \big|\operatorname{cov}[v_{f}]\big|^2=\big|\OtK\bCov[v_{f}]\big|^{2}
= \big|\OtK(\mathcal{D}M_{e^f}{\bf Cov}[u]M^{\ast}_{e^f}\mathcal{D}^{\ast})\big|^{2}\,.
\end{equation}
\end{subequations}
We note that $\bCov[v_{f}]$ is a Hilbert-Schmidt operator (see \Cref{Adjoin-Frechet-diff}, part (i) below) such that the expressions in \eqref{par-to-ker map} are well-defined.   

We also note the following alternative expression for the forward operator
\begin{align}\label{eq:Fourier-PRP}
\begin{aligned}
F(f)(x,y) &=|\mathcal{F}_{2m}(K_f)(\mathfrak{f}x,-\mathfrak{f}y)|^2,\quad x,y\in\domY\\
&\text{with }K_f(p,q):= \mathfrak{f}^{m} n_{\mathfrak{f}}(p)e^{f(p)}K(p,q)e^{\overline{f(q)}}~\overline{n_{\mathfrak{f}}(q)}
\end{aligned}
\end{align}
where $\mathcal{F}_{2m}$ denotes the $2m$-dimensional Fourier transform. 
This follows from the identity $\operatorname{cov}[v_{f}](x,y)=n_{\mathfrak{f}}(x)(\mathcal{F}_{2m}K_{f})(\mathfrak{f}x,-\mathfrak{f}y)\overline{n_{\mathfrak{f}}(y)}$, $x,y\in\domY$ 
obtained from the form \eqref{Fresnel op-Alter} of the Fresnel propagator. 
This shows that our inverse problem can be seen as a Fourier phase retrieval 
problem in even dimensions with a rather special structure. 

The complete nonlinear inverse problem can then be formulated as 
stably reconstructing $f$ from sample correlations $\widehat{\operatorname{cov}}_{\Nfr}[I_{f}]$
defined in \eqref{eq:ergodicity} by approximately solving  the equation
\begin{equation}\label{eq:nonlinear_inverse_problem}
  F(f) \approx \widehat{\operatorname{cov}}_{\Nfr}[I_{f}].  
\end{equation}
%%%%%%%%%%%%%%%%%%%%%%%%%%%%%%%%%%%%%%%%%%%%%%%%%%%%%%
\section{Fr\'echet derivative and its adjoint}\label{Frechet derivative and its adjoint}
\subsection{Fr\'echet derivative of the forward operator}
The following lemma on well-definedness and Fr\'echet differentiability of the forward operator is  mostly straightforward and mainly formulated to introduce notation:

\begin{lemma}\label{lemma-Frechet diff}
 Let $f\in L_{\Cset}^{\infty}(\domX)$. Then the following hold true.
\begin{itemize}
\item[(i)] The operator 
\begin{equation*}
    \mathcal{C}(f):L_{\Cset}^{\infty}(\domX)\to\HS(L^{2}(\domY)), \quad f \mapsto  \bCov[v_{f}]
\end{equation*}
is well-defined by \eqref{eq: corr} 
and Fr\'echet differentiable with derivative 
\begin{equation}\label{eq:Frechet_CD}
\mathcal{C}'[f]h=\mathcal{D}M_{e^f}\mathcal{R}(h)M^{\ast}_{e^f}\mathcal{D}^{\ast},~~~~\text{for all}~~f,h\in L_{\Cset}^{\infty}(\domX).
\end{equation}
Here $ \mathcal{R}(h):=M_{h}{\bCov}[u]+{\bCov}[u] M_{\overline{h}} $, 
and all Banach spaces are considered as real Banach spaces. (Note that 
$\mathcal{R}$ and $\mathcal{C}'[f]$ are not complex linear!)
\item[(ii)]
The forward operator $F$ in \eqref{par-to-ker map} is well-defined and Fr\'{e}chet differentiable with 
\begin{equation}\label{eq:Frechet_F}
 F'[f]h=2\re\paren{\overline{\OtK\paren{ \mathcal{C}[f]}}\cdot \OtK\paren{ \mathcal{C}'[f]h}}~~\text{for } ~f,h\in L_{\Cset}^{\infty}(\domX),
\end{equation}
where $\re(f)(x):=\re(f(x))$ denotes the pointwise real part of a function $ f $.  
\end{itemize}    
\end{lemma}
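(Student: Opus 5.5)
The plan is to build $\mathcal{C}$ and $F$ as compositions of elementary maps whose differentiability is standard, and then apply the chain and product rules. Throughout, all spaces are treated as real Banach spaces.

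\textbf{Step 1: well-definedness of $\mathcal{C}$.} First, $\mathcal{D}:L^2(\domX)\to L^2(\domY)$ is bounded: extend by zero from $\domX$ to $\Rset^m$, apply the unitary $\mathcal{D}$, and restrict to $\domY$. Second, $M_{e^f}$ is bounded on $L^2(\domX)$ with $\|M_{e^f}\|\le e^{\|f\|_\infty}$. Third, $\HS$ is a two-sided ideal, with $\|A\cK B\|_{\HS}\le\|A\|\,\|\cK\|_{\HS}\,\|B\|$. Combining these with \Cref{ass:HilbertSchmidt}, the operator $\mathcal{C}(f)=\mathcal{D}M_{e^f}\bCov[u]M_{e^f}^\ast\mathcal{D}^\ast$ lies in $\HS(L^2(\domY))$.

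\textbf{Step 2: differentiability of $\mathcal{C}$.} Write $\mathcal{C}$ as a composition:
\begin{itemize}
\item $f\mapsto e^f$ on $L^\infty_\Cset(\domX)$, which is Fr\'echet differentiable with derivative $h\mapsto e^f h$. This follows from the power series estimate $\|e^{f+h}-e^f-e^fh\|_\infty\le e^{\|f\|_\infty}\big(e^{\|h\|_\infty}-1-\|h\|_\infty\big)=O(\|h\|_\infty^2)$.
\item $g\mapsto M_g$, which is a bounded linear isometry $L^\infty\to\mathcal{B}(L^2)$.
\item $A\mapsto A^\ast$, which is real linear and isometric.
\item $(A,B)\mapsto \mathcal{D}A\bCov[u]B\mathcal{D}^\ast$, which is bounded real bilinear $\mathcal{B}\times\mathcal{B}\to\HS$ by the ideal estimate.
\end{itemize}
The product and chain rules then give
\[
\mathcal{C}'[f]h=\mathcal{D}\big(M_{e^fh}\bCov[u]M_{e^f}^\ast+M_{e^f}\bCov[u]M_{e^fh}^\ast\big)\mathcal{D}^\ast.
\]
Since multiplication operators commute, $M_{e^fh}=M_{e^f}M_h$ and $M_{e^fh}^\ast=M_{\overline h}M_{e^f}^\ast$. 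Substituting these identities yields exactly \eqref{eq:Frechet_CD} with $\mathcal{R}(h)=M_h\bCov[u]+\bCov[u]M_{\overline h}$. Conjugation makes the map only real linear, which is consistent with the statement.

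\textbf{Step 3: differentiability of $F$.} Write $F=Q\circ\OtK\circ\mathcal{C}$ with $Q(k)=|k|^2=k\overline k$. Here $\OtK$ is bounded real linear (indeed unitary). $Q:L^2(\domY\times\domY)\to L^1_\Rset(\domY\times\domY)$ is the restriction to the diagonal of the bounded real bilinear map $(k_1,k_2)\mapsto \re(k_1\overline{k_2})$. This map is bounded by Cauchy--Schwarz, $\|k_1\overline{k_2}\|_{L^1}\le\|k_1\|_{L^2}\|k_2\|_{L^2}$, so $Q$ is Fr\'echet differentiable with $Q'[k]g=2\re(\overline k g)$. The remainder is $|g|^2$, whose $L^1$ norm equals $\|g\|_{L^2}^2$. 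The chain rule then yields \eqref{eq:Frechet_F}.

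The only point requiring care, and the main obstacle, is the choice of the image spaces: $F$ must map into $L^1$ and not $L^2$, because a product of two $L^2$ kernels lies only in $L^1$. One must also keep track of real versus complex linearity. Both issues are handled by the Cauchy--Schwarz bound and by treating every space as real.
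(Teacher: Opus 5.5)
Your proposal is correct and follows essentially the same route as the paper. Both arguments write $\mathcal{C}$ as the bounded bilinear map $(g,k)\mapsto M_g\bCov[u]M_k^\ast$ (sandwiched by $\mathcal{D}$ and $\mathcal{D}^\ast$) composed with $f\mapsto(e^f,e^f)$, and then obtain $F'$ via the chain rule from $F=\mathcal{S}\circ\OtK\circ\mathcal{C}$ with $\mathcal{S}'[k]g=2\re(\overline{k}g)$; your explicit remainder estimates for $e^f$ in $L^\infty$ and for $|k|^2$ in $L^1$ simply fill in details the paper calls straightforward.
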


\begin{proof}
\textbf{Part}(i). Note that for $ f\in L_{\Cset}^{\infty}(\domX) $ the operators $ M_{e^f} $ and $ M^{\ast}_{e^f}:L^{2}(\domX)\longrightarrow L^{2}(\domX) $ are bounded and  
$ \|M_{e^f}\|=\|M^{\ast}_{e^f}\|=\|e^f\|_{L_{\Cset}^{\infty}}<\infty $. It follows from \Cref{ass:HilbertSchmidt} and the fact that 
compositions of bounded and Hilbert-Schmidt operators are 
Hilbert-Schmidt that $\mathcal{C}[f]\in \HS(L^2(\domY))$. 
Consider the bilinear map $ \mathcal{B}:L_{\Cset}^{\infty}(\domX)\times L_{\Cset}^{\infty}(\domX)\longrightarrow \HS(L^{2}(\domX))$ defined by $(g,h)\mapsto\mathcal{B}(g,h):=M_g{\bCov}[u]M_h^{\ast}$ for all $g,h\in L_{\Cset}^{\infty}(\domX)$. 
Clearly, $ \mathcal{B} $ is well-defined 
and Fr\'echet differentiable with 
$\mathcal{B}'[g,h](\delta g,\delta h) = 
M_{\delta g}\bCov[u]M_h^{\ast} + M_g\bCov[u] M_{\delta h}^{\ast}$. Then the chain rule and the Fr\'{e}chet differentiability of the mapping 
$f\mapsto(e^f,e^f)$ yield \eqref{eq:Frechet_CD}.\\ 
\textbf{Part}(ii). The second part follows from the first part 
and the fact that the forward operator 
\[
 F=\mathcal{S} \circ \OtK \circ \mathcal{C},
\]
is a composition with the linear isomorphism $ \OtK $ and the pointwise squared modulus 
$\mathcal{S}:L_{\Rset}^{2}(\domY\times\domY)\longrightarrow L_{\Rset}^{1}(\domY\times\domY) $, $\mathcal{S}(f):=|f|^2$, which 
is Fr\'{e}chet differentiable (with $ L_{\Rset}^{2}(\domY\times\domY)$ considered as real Hilbert space) and 
$\mathcal{S}'[f]h = 2\re(\overline{f}h)$. 
\end{proof}

\subsection{Adjoint of Fr\'{e}chet derivative}
\label{sec: adjoint of frechet derivative}
For iterative regularization methods, we also need the 
adjoint of the Fr\'echet derivative. This is 
mostly standard, except for the adjoint of the 
mapping 
\begin{equation}\label{eq:operatorM}
\mathcal{M}:L_{\Cset}^{\infty}(\domX)\longrightarrow\mathcal{B}(L^{2}(\domX)),   \qquad  h\mapsto M_{h}
\end{equation}
which occurs in $\mathcal{C}'[f]$. 
In the discrete setting, it is evident that the adjoint of the mapping $\diag:\mathbb C^{m}\longrightarrow\mathbb C^{m\times m}$, 
defined by $h\mapsto\diag(h)$ with respect to Euclidean or Frobenius inner products, is the function 
$\Diag: \mathbb C^{m\times m}\to \mathbb C^m$ which 
maps a square matrix to its diagonal. 
Formulating a continuous analog is less obvious. 
Let $\mathcal{K}\in\HS(L^{2}(\domX))$ be a Hilbert-Schmidt integral operator corresponding to kernel $K\in L^{2}(\domX\times\domX)$. We are motivated to define $ (\Diag\mathcal{K})(x):=K(x,x) $ as the diagonal of the operator kernel $ K $. Since $ K\in L^{2}(\domX\times\domX) $ and the diagonal set $ \lbrace (x,x)\in\domX\times\domX~:~x\in\domX\rbrace $ have zero measure, the restriction of $ K $ to the diagonal is not well-defined. However, for the subspace 
$ \mathcal{S}_{1}(L^{2}(\domX))$ of operators $\mathcal{K}
\in \HS(L^2(\domX))$ for which the 
singular values of which are not only square summable but even summable, equipped with the norm 
$\|\mathcal{K}\|_{\mathcal{S}_1}:=\sum_{n=1}^{\infty}\sigma_n(\mathcal{K})$, 
two of the authors have recently shown in 
\cite{muller2024quantitative}
that there exists a unique bounded linear operator 
\[
\Diag:\mathcal{S}_{1}(L^{2}(\domX))\longrightarrow L^{1}(\domX) \quad  
\text{with}\quad 
\Diag(\mathcal{K})(x)=K(x,x)
\] 
for all $ x\in\domX $ and all operators $ \mathcal{K}\in\mathcal{S}_{1}(L^{2}(\domX))$ with continuous kernel $ K $. 
Operators $\mathcal{K}\in \mathcal{S}_1(L^2(D))$ are called 
\emph{trace class operators} since the  
trace of $\mathcal{K}$ is well-defined by 
$\Tr \mathcal{K}:=\sum_{n=1}^{\infty}(\mathcal{K}\varphi_n,\varphi_n)$ 
for any complete orthonormal system $\{\varphi_n:n\in\mathbb{N}\}$ in $L^2(\domX)$.
Moreover, 
$
\Tr(\mathcal{K})=\int_{\domX}\Diag(\mathcal{K})\,\mathrm{d}x.
$ 

The adjoint of $\mathcal{M}$ in \eqref{eq:operatorM} maps from 
$\mathcal{B}(L^{2}(\domX))'$ to $L_{\Cset}^{\infty}(\Omega)'$, and both 
of these spaces are inconvenient. However, 
$\mathcal{B}(L^{2}(\domX))$ has a nice predual: we have $\mathcal{S}_{1}(L^{2}(\domX)) ' = \mathcal{B}(L^{2}(\domX)) $
with respect to the dual pairing
$\langle A,B\rangle:=\Tr(B^{\ast}A)$ (see, e.g., \cite[Thm. VI.26]{reed2012methods}). 
It follows that $\mathcal{S}_{1}(L^{2}(\domX))\subset
\mathcal{S}_{1}(L^{2}(\domX))''=\mathcal{B}(L^{2}(\domX))'$ 
in the sense of the canonical embedding of a Banach space into 
its bidual. 
With these preparations, we can formulate the following proposition:
\begin{proposition}\label{diag-lemma}
Let $ \domX\subset\mathbb R^{m} $ be a bounded domain. Then the adjoint operator $ \Diag^{\ast}:L_{\Cset}^{\infty}(\domX)\longrightarrow \mathcal{B}(L^{2}(\domX)) $  is given by 
\begin{equation}
\Diag^{\ast}=\mathcal{M}.
\end{equation}
In particular, $ \mathcal{M}^{\ast}\Big|_{\mathcal{S}^{1}(\domX)}=\Diag$, and the restriction of $\mathcal{M}^{\ast}:\mathcal{B}(L^{2}(\domX))'\to L_{\Cset}^{\infty}(\Omega)'$ to 
$\mathcal{S}^{1}(\domX)\subset \mathcal{B}(L^{2}(\domX))'$ 
takes values in $L^1(\domX)\subset L_{\Cset}^{\infty}(\domX)'$.
\end{proposition}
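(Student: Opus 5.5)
The plan is to verify the defining identity of the adjoint directly for the dual pairings introduced above. The pairing between $\mathcal{S}_1(L^2(\domX))$ and $\mathcal{B}(L^2(\domX))$ is $\langle A,B\rangle=\Tr(B^{\ast}A)$, and the pairing between $L^1(\domX)$ and $L^\infty_{\Cset}(\domX)$ is $\langle \varphi,h\rangle=\int_\domX \varphi\,\overline{h}\,\D x$. Since $\Diag:\mathcal{S}_1\to L^1$ is bounded, its adjoint maps $(L^1)'=L^\infty_{\Cset}$ into $\mathcal{S}_1'=\mathcal{B}(L^2(\domX))$. It is therefore characterized by
\[
\Tr\big((\Diag^\ast h)^\ast \mathcal{K}\big)=\int_\domX \Diag(\mathcal{K})\,\overline{h}\,\D x \quad\text{for all } \mathcal{K}\in\mathcal{S}_1,\ h\in L^\infty_{\Cset}(\domX).
\]
We have to show that $B=M_h$ satisfies this, i.e.\ that $\Tr(M_{\overline h}\mathcal{K})=\int_\domX \overline{h}\,\Diag(\mathcal{K})\,\D x$. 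Uniqueness of the adjoint then gives $\Diag^\ast=\mathcal{M}$.

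\textbf{Step 1: rank-one operators.} Take $\mathcal{K}=\varphi\otimes\psi$, i.e.\ $\mathcal{K}g=\langle g,\psi\rangle\varphi$, with continuous $\varphi,\psi$. Its kernel $K(x,y)=\varphi(x)\overline{\psi(y)}$ is continuous, so $\Diag\mathcal{K}=\varphi\overline{\psi}$ by the characterization from \cite{muller2024quantitative}. On the other side, $M_{\overline h}\mathcal{K}=(\overline h\varphi)\otimes\psi$, whose trace is $\langle \overline h\varphi,\psi\rangle=\int_\domX \overline h\,\varphi\,\overline\psi\,\D x$. The two sides agree.

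\textbf{Step 2: density.} Finite linear combinations of such rank-one operators with continuous (or $L^2$-approximable-by-continuous) factors are dense in $\mathcal{S}_1$. This follows from the singular value decomposition $\mathcal{K}=\sum_n\sigma_n\,\varphi_n\otimes\psi_n$ with $\sum_n\sigma_n<\infty$, combined with approximating each $\varphi_n,\psi_n$ in $L^2$ by continuous functions, using $\|\varphi\otimes\psi\|_{\mathcal{S}_1}\le\|\varphi\|\|\psi\|$. Both sides of the identity are continuous in $\mathcal{K}$ with respect to $\|\cdot\|_{\mathcal{S}_1}$:
\begin{itemize}
\item the left side because $|\Tr(B\mathcal{K})|\le\|B\|\,\|\mathcal{K}\|_{\mathcal{S}_1}$ and $\|M_{\overline h}\|=\|h\|_\infty$;
\item the right side because $\Diag$ is bounded into $L^1$ and $h\in L^\infty$.
\end{itemize}
Hence the identity extends to all of $\mathcal{S}_1$.

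\textbf{Step 3: the restriction statement.} The remaining claim is formal duality. Let $\mathcal{K}\in\mathcal{S}_1\subset\mathcal{B}(L^2)'$ via the canonical embedding, so $\mathcal{K}$ acts on $B$ by $\langle\mathcal{K},B\rangle=\Tr(B^\ast\mathcal{K})$. Then for $h\in L^\infty_{\Cset}$,
\[
\langle\mathcal{M}^\ast\mathcal{K},h\rangle=\langle\mathcal{K},M_h\rangle=\Tr(M_{\overline h}\mathcal{K})=\int_\domX \Diag(\mathcal{K})\,\overline{h}\,\D x.
\]
So $\mathcal{M}^\ast\mathcal{K}$ is the functional induced by $\Diag(\mathcal{K})\in L^1\subset(L^\infty)'$, which is the claim.

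The main obstacle is being consistent about sesquilinear versus bilinear pairings and the placement of conjugates. The density argument itself is routine given the boundedness of $\Diag$ from \cite{muller2024quantitative}.
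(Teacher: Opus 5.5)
Your proof is correct. It has the same skeleton as the paper's: verify $\Tr(M_{\overline h}\mathcal{K})=\int_{\domX}\overline{h}\,\Diag(\mathcal{K})\,\D x$ for all $\mathcal{K}\in\mathcal{S}_1(L^2(\domX))$ and $h\in L^\infty_{\Cset}(\domX)$, then read off both $\Diag^\ast=\mathcal{M}$ and the statement about $\mathcal{M}^\ast$ restricted to $\mathcal{S}_1$.

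The difference is how that identity is justified. The paper does it in one line. It writes the kernel of $M_h^\ast\mathcal{K}$ as $\overline{h(x)}K(x,y)$, writes $K(x,x)$ for $\Diag(\mathcal{K})(x)$, and applies the trace formula $\Tr=\int_{\domX}\Diag$ to the operator $M_h^\ast\mathcal{K}$. This tacitly uses $\Diag(M_{\overline h}\mathcal{K})=\overline{h}\,\Diag(\mathcal{K})$. That relation does not follow directly from the continuous-kernel characterization of $\Diag$, because $\overline{h(x)}K(x,y)$ is in general not continuous when $h$ is only in $L^\infty$.

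Your route avoids this. On rank-one operators with continuous factors you compute $\Tr\big((\overline h\varphi)\otimes\psi\big)=\langle\overline h\varphi,\psi\rangle$ directly, so you never apply $\Diag$ to an operator with a discontinuous kernel. You then extend by density, using only the boundedness of $\Diag:\mathcal{S}_1\to L^1$ and of $\mathcal{K}\mapsto\Tr(B\mathcal{K})$. The paper's argument is shorter. Yours needs less from the cited result on $\Diag$ (only its action on continuous kernels and its boundedness) and makes the formal step rigorous.
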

\begin{proof}
Let $\mathcal{K}\in S_1(L^2(\domX))$ be an operator 
with integral kernel $K$ and let $h\in L_{\Cset}^{\infty}(\domX)$. Then the integral kernel of 
$M_h^{\ast}\mathcal{K}$ is given by $\overline{h(x)}K(x,y)$. Therefore,
\begin{align*}
\langle\Diag(\mathcal{K}),h\rangle_{L^{2}}
&=\int_{\domX}\Diag(\mathcal{K})(x)\overline{h(x)}\D x
= \int_{\domX} K(x,x)\overline{h(x)}\D x\\
&=\Tr(M_{h}^{\ast}\mathcal{K})
=\langle \mathcal{K},M_{h}\rangle
=\langle\mathcal{K},\mathcal{M}(h)\rangle.
\end{align*}
\end{proof}

We can now characterize the adjoint of the 
Fr\'echet derivative as follows:

\begin{proposition}\label{Adjoin-Frechet-diff}
Let $ f\in L_{\Cset}^{\infty}(\domX) $. Then the following holds true:
\begin{itemize}
\item[(i)] The adjoint $ \mathcal{C}'[f]^{\ast}:\HS(L^{2}(\domY))\longrightarrow L_{\Cset}^{\infty}(\domX)'$ takes values in the pre-dual space $ L^{1}(\domX)\subset L_{\Cset}^{\infty}(\domX)' $ of $ L_{\Cset}^{\infty}(\domX)$ and is given by
\begin{equation}
\mathcal{C}'[f]^{\ast} \mathcal{Q}=2\Diag\paren{\Re\paren{
M^{\ast}_{e^f}\mathcal{D}^{\ast}\mathcal{Q}\mathcal{D}M_{e^f}}\bCov[u]},
\end{equation} 
for all $\mathcal{Q}\in\HS(L^{2}(\domY))$ and $\Re(\mathcal{A}):=\frac{1}{2}(\mathcal{A}+\mathcal{A}^{\ast})$ for $\mathcal{A}\in \HS(\domX)$.
\item[(ii)] The adjoint $  F'[f]^{\ast}: L_{\Rset}^{\infty}(\domY\times\domY)\longrightarrow L_{\Cset}^{\infty}(\domX)' $ takes values in the pre-dual space $ L^{1}(\domX)\subset L_{\Cset}^{\infty}(\domX)' $ and  is given by 
\begin{equation}
 F'[f]^{\ast}A=2\mathcal{C}'[f]^{\ast} \KtO\paren{\OtK\paren{\mathcal{C}(f)} \cdot A },~~\text{for all}~~A\in L_{\Rset}^{\infty}(\domY\times\domY).\notag
\end{equation}
\end{itemize}
  \end{proposition}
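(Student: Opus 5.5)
The plan is to compute both adjoints directly from the real inner products. $L^\infty_{\Cset}(\domX)$ is treated as a real space paired with $L^1(\domX)$ via $\re\int_{\domX} g\,\overline{h}\,\D x$. $\HS(L^2(\domY))$ is treated as a real Hilbert space with $\re\Tr(\mathcal{B}^\ast\mathcal{A})$. $L^2_{\Rset}(\domY\times\domY)$ carries its standard inner product. Both parts then reduce to moving operators across the trace and invoking \Cref{diag-lemma}.

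\textbf{Part (i).} Fix $\mathcal{Q}\in\HS(L^2(\domY))$ and $h\in L^\infty_{\Cset}(\domX)$. Using \eqref{eq:Frechet_CD}, unitarity of $\mathcal{D}$ and cyclicity of the trace, we get
\[
\re\Tr\big((\mathcal{C}'[f]h)^\ast\mathcal{Q}\big)=\re\Tr\big(\mathcal{R}(h)^\ast\,\mathcal{P}\big),\qquad \mathcal{P}:=M^\ast_{e^f}\mathcal{D}^\ast\mathcal{Q}\mathcal{D}M_{e^f}.
\]
Cyclicity is legitimate because $\mathcal{R}(h)$ is Hilbert--Schmidt, so every product that appears is trace class.

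Next we expand $\mathcal{R}(h)^\ast=\bCov[u]M_{\overline h}+M_h\bCov[u]$, using that $\bCov[u]$ is self-adjoint. By cyclicity,
\[
\Tr(\bCov[u]M_{\overline h}\mathcal{P})=\Tr(M_{h}^\ast\,\mathcal{P}\bCov[u]),\qquad \Tr(M_h\bCov[u]\mathcal{P})=\overline{\Tr(M_h^\ast\,\mathcal{P}^\ast\bCov[u])}.
\]
The second identity uses $\Tr(A)=\overline{\Tr(A^\ast)}$. Taking real parts, both terms combine into $2\re\Tr\big(M_h^\ast\,\Re(\mathcal{P})\bCov[u]\big)$.

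The operator $\Re(\mathcal{P})\bCov[u]$ is a product of two Hilbert--Schmidt operators, hence lies in $\mathcal{S}_1(L^2(\domX))$. \Cref{diag-lemma} then rewrites the trace as $\langle \Diag(\Re(\mathcal{P})\bCov[u]),h\rangle_{L^2}$. This identifies $\mathcal{C}'[f]^\ast\mathcal{Q}$ with the claimed $L^1$ function, and shows it lies in the predual.

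\textbf{Part (ii).} Let $A$ be real-valued. By \eqref{eq:Frechet_F},
\[
\langle F'[f]h,A\rangle=2\int_{\domY\times\domY}\re\big(\overline{\OtK\mathcal{C}(f)}\,\OtK(\mathcal{C}'[f]h)\big)A,
\]
which equals $2\re\langle \OtK(\mathcal{C}'[f]h),\,\OtK(\mathcal{C}(f))\,A\rangle_{L^2}$ because $A$ is real. The function $\OtK(\mathcal{C}(f))\,A$ lies in $L^2$, since $A\in L^\infty$. As $\OtK$ is unitary with inverse $\KtO$, this pairing equals $2\re\Tr\big(\KtO(\OtK(\mathcal{C}(f))A)^\ast\,\mathcal{C}'[f]h\big)$. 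Applying part (i) gives the formula.

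\textbf{Main obstacle.} I expect the main care point to be the bookkeeping of the real-linear structure: $\mathcal{R}$ is not complex linear, and conjugations must land correctly so that exactly $\Re(\mathcal{P})$ and the factor $2$ appear. A second point is justifying cyclicity of the trace and membership in $\mathcal{S}_1$ under \Cref{ass:HilbertSchmidt}. For this I use that $\mathcal{HS}\cdot\mathcal{HS}\subset\mathcal{S}_1$ and that $\Tr(AB)=\Tr(BA)$ when $AB$ and $BA$ are both trace class, e.g.\ when $A$ is bounded and $B\in\mathcal{S}_1$, or when both are Hilbert--Schmidt.
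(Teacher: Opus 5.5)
Your proof is correct and follows essentially the paper's route. For (i) the paper writes $\mathcal{R}(h)=2\Re(M_h\bCov[u])$ and composes the adjoints of the building blocks (self-adjointness of $\Re$, the adjoint $N\mapsto A^{\ast}NB$ of $M\mapsto AMB^{\ast}$, and \Cref{diag-lemma}); for (ii) it uses the chain rule $F'[f]^{\ast}=\mathcal{C}'[f]^{\ast}\circ\KtO\circ\mathcal{S}'[\cdot]^{\ast}$, and your explicit trace computations simply unroll these steps while spelling out the real pairings and the trace-class justifications that the paper leaves implicit. The only slip is the appeal to unitarity of $\mathcal{D}$: your cyclicity step needs only boundedness, and unitarity in fact fails once the output is restricted to the bounded set $\domY$.
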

\begin{proof}
\textbf{Part}(i). We write $ \mathcal{R}(h)=2\Re(M_{h}{\bCov}[u]) $ and note that $\Re$ is self-adjoint  in $\HS(L^2(\domX))$ and that the adjoint of 
$\HS(\mathbb{X})\to\HS(\mathbb{Y})$, 
$M\mapsto AMB^{\ast}$ with $A,B\in \mathcal{B}(\mathbb{X},\mathbb{Y})$ is given by $N\mapsto A^{\ast}NB$. 
Invoking \Cref{diag-lemma}, we obtain the assertion.\\
\textbf{Part}(ii). The Fr\'{e}chet derivative $  F'[f] $ is of the form $  F'[f]=\mathcal{S}'[\dots]\circ \OtK\circ \mathcal{C}'[f] $. This implies $  F'[f]^{\ast}=\mathcal{C}'[f]^{\ast}\circ \KtO\circ\mathcal{S}'[\dots]^{\ast}   $ and the assertion follows by substituting the partial operators.
 \end{proof}
\section{Uniqueness}\label{Linearized and Uniqueness}
\subsection{Unavoidable ambiguities}\label{unavoidable ambiguity}
A crucial aspect of the mathematical analysis of phase retrieval problems is 
identifying all potential ambiguities. 
In this discussion, we will identify unavoidable ambiguities inherent in both 
linear and non-linear inverse problems, and 
show that under certain conditions these are 
all sources of non-uniqueness.

\begin{lemma}[Non-uniqueness caused by global phase shifts]\label{lem:ambiguity}
Suppose that $K:=\operatorname{cov}[u]$ 
satisfies \Cref{ass:HilbertSchmidt}, and $K=\sum_{j=1}^JK_j$ 
with  $j=1,2,\dots,J,$ with $1\leq J\leq\infty$  
is the sum 
of positive semi-definite kernels 
$K_j$ with pairwise disjoint supports 
$\supp K_j\subset \mathcal{X}_j\times \mathcal{X}_j$.  
Then for any function $f\in L_{\Cset}^{\infty}(\domX)$ and any 
$h$ of the form 
\begin{align}\label{eq:kernel_fct}
h\in\mathcal{N}_{\{K_j\}}:=
\left\{\sum\nolimits_{j=1}^J i c_j \mathds{1}_{\mathcal{X}_j}: c_j\in\mathbb{R}\right\}
\end{align}
with indicator functions 
$\mathds{1}_{\mathcal{X}_j}(x)=1$ if $x\in\mathcal{X}_j$, $\mathds{1}_{\mathcal{X}_j}(x)=0$ else, we have 
\begin{align}\label{eq:ambiguity}
 F(f) = F(f+h)   \quad \text{and}\quad 
 F'[f]h = 0.
\end{align}
\end{lemma}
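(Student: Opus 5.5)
The idea is to show that $\mathcal{C}(f+h)=\mathcal{C}(f)$ for every $h\in\mathcal{N}_{\{K_j\}}$, i.e.\ that the kernel of $M_{e^{f+h}}\bCov[u]M^{\ast}_{e^{f+h}}$ coincides with that of $M_{e^{f}}\bCov[u]M^{\ast}_{e^{f}}$. Both claims in \eqref{eq:ambiguity} then follow from the representation $F=\mathcal{S}\circ\OtK\circ\mathcal{C}$ and from \Cref{lemma-Frechet diff}.

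\textbf{Step 1 (kernel computation).} Let $h=\sum_j ic_j\mathds{1}_{\mathcal{X}_j}$. Because the supports $\supp K_j\subset\mathcal{X}_j\times\mathcal{X}_j$ are pairwise disjoint, we may take the relevant parts of the sets $\mathcal{X}_j$ to be disjoint. This is the point I expect to require care: the sets $\mathcal{X}_j$ themselves need not be disjoint, and the argument should only use the values of $h$ on the supports $\supp K_j$. If $\mathcal{X}_j\cap\mathcal{X}_k\neq\emptyset$, then $h$ takes the value $i(c_j+c_k)$ there. The cleanest way out is to interpret the lemma with the $\mathcal{X}_j$ chosen disjoint, which is possible by shrinking them, for instance to the projections of $\supp K_j$. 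In the proof I would state this normalization explicitly.

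Under this normalization, the integral kernel of $M_{e^{f+h}}\bCov[u]M^{\ast}_{e^{f+h}}$ is
\[
e^{f(x)+h(x)}K(x,y)\overline{e^{f(y)+h(y)}}=\sum_{j}e^{f(x)}K_j(x,y)\overline{e^{f(y)}}\,e^{ic_j}e^{-ic_j},
\]
since on $\supp K_j$ we have $h(x)=h(y)=ic_j$. The phase factors cancel, so the kernel equals that of $M_{e^f}\bCov[u]M^{\ast}_{e^f}$. Applying $\mathcal{D}(\cdot)\mathcal{D}^{\ast}$ to both operators gives $\mathcal{C}(f+h)=\mathcal{C}(f)$, and therefore $F(f+h)=|\OtK\,\mathcal{C}(f+h)|^2=F(f)$.

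\textbf{Step 2 (derivative).} By \eqref{eq:Frechet_CD}, it suffices to show $\mathcal{R}(h)=M_h\bCov[u]+\bCov[u]M_{\overline h}=0$. The kernel of $\mathcal{R}(h)$ is $(h(x)+\overline{h(y)})K(x,y)$, and on $\supp K_j$ this equals $(ic_j-ic_j)K_j(x,y)=0$. Hence $\mathcal{C}'[f]h=0$, and \eqref{eq:Frechet_F} gives $F'[f]h=0$.

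An alternative route to Step 2 is to differentiate the identity $F(f+th)=F(f)$ at $t=0$, since $th\in\mathcal{N}_{\{K_j\}}$ for all real $t$. This argument uses only Step 1 and the Fr\'echet differentiability established in \Cref{lemma-Frechet diff}.

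Positive semi-definiteness of the $K_j$ is not needed for the cancellation. It only guarantees that the decomposition is compatible with $K$ being a covariance kernel.
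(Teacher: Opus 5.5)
Your proof is correct and is essentially the paper's argument written at the level of kernels. The paper notes that $M_{e^h}$ commutes with $\bCov[u]$ and that $\overline{e^h}=e^{-h}$, which is exactly your cancellation $e^{h(x)}\overline{e^{h(y)}}=1$ on $\supp K$, and it gets $F'[f]h=0$ by differentiating $F(f+th)=F(f)$ at $t=0$, your alternative route (your direct check $\mathcal{R}(h)=0$ works just as well).

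Your caveat about the $\mathcal{X}_j$ is well founded. The paper's commutation claim tacitly needs their disjointness too, and with overlapping $\mathcal{X}_j$ the identity $F(f+h)=F(f)$ can genuinely fail. So disjointness must be read as a hypothesis, not as a harmless shrinking, since shrinking the $\mathcal{X}_j$ changes $h$. Also, contrary to your last remark, positive semi-definiteness of the $K_j$ is what makes a disjoint choice possible at all: $|K_j(x,y)|^2\le K_j(x,x)K_j(y,y)$ forces disjoint supports to have disjoint projections.
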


\begin{proof}
First, assume that $J=1$. Then 
$h$ is a constant function, $M_{e^h}$ reduces 
to a scalar multiplication, and hence, it commutes with 
$\bCov[u]$. Moreover, since $h$ is purely imaginary, $\overline{e^h} = e^{-h}$.
Therefore, 
\begin{align*}
M_{e^{f+h}}\bCov[u]M_{e^{f+h}}^{\ast}
&=M_{e^f} M_{e^h}\bCov[u]M_{\overline{e^h}}
M_{\overline{e^f}}\\
&= M_{e^f} \bCov[u]M_{e^h}M_{e^{-h}}
M_{\overline{e^f}}\\
&=M_{e^f}\bCov[u]M_{e^f}^{\ast}.
\end{align*}
This implies that $F(f+h)=F(f)$. It is easy to 
see that if $K=\sum_{j=1}^J K_j$  and $h$ is of the form \eqref{eq:kernel_fct}, then $\bCov[u]$ and $M_h$ still commute and $\overline{e^h} = e^{-h}$.
Hence, the equations above remain valid, and we can deduce that $F(f+h)=F(f)$. It follows that 
$F'[f]h = \displaystyle{\lim_{t\to 0}\frac{1}{t}(F(f+th)-F(f))=0}$.  
\end{proof}

We discuss two further sources of non-uniqueness:
\begin{remark}[Non-uniqueness caused by local phase shifts of multiples of $2\pi$]\label{rem:nonunique_local_shift}
Let $h\in L_{\Cset}^{\infty}(\domX)$ be a function with values in $2\pi i\Zset$ a.e. and
$f\in L_{\Cset}^{\infty}(\domX)$. Then $F(f+h)=F(f)$
since $e^{f+h}=e^f$ and $f$ only occurs in $F$ via $e^f$. 
\end{remark}
Note that we can avoid this source of non-uniqueness by confining ourselves 
to continuous functions $f$.

The non-uniqueness in \Cref{rem:nonunique_local_shift} does not occur 
in the linearized case, but the validity of the linearization becomes questionable 
for phase jumps in the order of $\geq 2\pi$. We discuss a third type of 
non-uniqueness for the nonlinear problem.
\begin{lemma}[Non-uniqueness caused by twisted symmetry]\label{lem:symmetry}
   Let $K$ satisfy the symmetry condition $K(p,q)=\overline{K(-p,-q)}$ and define $g(p):=\overline{f(-p)}-i\mathfrak{f}|p|^2$. Then $F(g)=F(f)$. 
\end{lemma}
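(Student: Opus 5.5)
We work with the Fourier representation \eqref{eq:Fourier-PRP}, $F(f)(x,y)=|\mathcal{F}_{2m}(K_f)(\mathfrak{f}x,-\mathfrak{f}y)|^2$. The plan is to show that $K_g$ is, up to a unimodular constant, the complex conjugate of $K_f$ reflected through the origin in $\Rset^{2m}$. Since $|\mathcal{F}_{2m}(\overline{\psi(-\cdot)})|=|\overline{\mathcal{F}_{2m}\psi}|=|\mathcal{F}_{2m}\psi|$ pointwise, this gives $F(g)=F(f)$.

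\textbf{Step 1: the chirp factors.} We have $e^{g(p)}=e^{\overline{f(-p)}}\,e^{-i\mathfrak{f}|p|^2}=e^{\overline{f(-p)}}\,\overline{n_{\mathfrak f}(p)}^{\,2}$. Hence
\[
n_{\mathfrak f}(p)e^{g(p)}=\overline{n_{\mathfrak f}(p)}\,e^{\overline{f(-p)}}=\overline{n_{\mathfrak f}(-p)e^{f(-p)}},
\]
using that $n_{\mathfrak f}$ is even. Similarly, $e^{\overline{g(q)}}\,\overline{n_{\mathfrak f}(q)}=e^{f(-q)}e^{i\mathfrak f|q|^2}\,\overline{n_{\mathfrak f}(q)}=n_{\mathfrak f}(-q)e^{f(-q)}=\overline{e^{\overline{f(-q)}}\,\overline{n_{\mathfrak f}(-q)}}$.

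\textbf{Step 2: the kernel.} Combining Step 1 with the hypothesis $K(p,q)=\overline{K(-p,-q)}$ gives
\[
K_g(p,q)=\mathfrak f^m\,\overline{n_{\mathfrak f}(-p)e^{f(-p)}K(-p,-q)e^{\overline{f(-q)}}\,\overline{n_{\mathfrak f}(-q)}}=\overline{K_f(-p,-q)} .
\]

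\textbf{Step 3: the Fourier transform.} By direct substitution $(p,q)\to(-p,-q)$ in \eqref{FT}, $\mathcal{F}_{2m}(\overline{\psi(-\cdot)})(\xi)=\overline{\mathcal{F}_{2m}\psi(\xi)}$. Therefore the moduli agree pointwise, and $F(g)=F(f)$ on $\domY\times\domY$.

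\textbf{Main obstacle: domains.} The only real care needed is about where things are defined. $f$ lives on $\domX$, while $g$ is defined on $-\domX$, and the symmetry of $K$ forces $\domX=-\domX$ through \eqref{object domain}. So we extend $f$ and $K$ by zero (with $e^f$ understood only on $\domX$, where $K\neq 0$) and note that $K_f$ vanishes outside $\domX\times\domX$. With that convention, the Fourier representation applies verbatim to both $f$ and $g$. We also need $g\in L^\infty_{\Cset}(\domX)$, which holds because $\domX$ is bounded.

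Alternatively, one can phrase the whole argument as the operator identity $\mathcal{D}M_{e^g}\bCov[u]M_{e^g}^\ast\mathcal{D}^\ast=\mathcal{J}\mathcal{D}M_{e^f}\bCov[u]M_{e^f}^\ast\mathcal{D}^\ast\mathcal{J}$, where $\mathcal{J}\varphi(x)=\overline{\varphi(-x)}$ (up to conjugation of the kernel). The Fourier route above avoids having to handle the antilinear operator.
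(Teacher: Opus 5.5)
Your proof is correct and follows the paper's argument: rewrite the chirp and phase factors to get $K_g(p,q)=\overline{K_f(-p,-q)}$ using the symmetry of $K$, then apply $\mathcal{F}_{2m}(\overline{\psi(-\cdot)})=\overline{\mathcal{F}_{2m}\psi}$ and take moduli. Your observation that the symmetry of $K$ forces $\domX=-\domX$ (so that $g$ is a legitimate element of $L^\infty_{\Cset}(\domX)$) is a detail the paper leaves implicit.

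One caution about your closing alternative: the operator identity with $\mathcal{J}\varphi(x)=\overline{\varphi(-x)}$ does not hold in the detector plane. The correct relation is $\operatorname{cov}[v_g](x,y)=n_{\mathfrak f}(x)^2\,\overline{\operatorname{cov}[v_f](x,y)}\,\overline{n_{\mathfrak f}(y)}^2$, a chirp-modulated pointwise conjugation with no reflection. Taken literally, your identity would give $F(g)(x,y)=F(f)(-x,-y)$, not the lemma. Your main argument does not rely on this remark, so the proof itself is unaffected.
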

\begin{proof}
Substituting
$e^{g(p)} = e^{\overline{f(-p)}} e^{-i\mathfrak{f}|p|^2}$ and 
$e^{\overline{g(q)}} = e^{f(-q)} e^{i\mathfrak{f}|q|^2}$ 
into $K_g$ and collecting phase terms gives
\begin{align*}
    K_g(p,q)&= \mathfrak{f}^{m} n_{\mathfrak{f}}(p)e^{g(p)}K(p,q)e^{\overline{g(q)}}~\overline{n_{\mathfrak{f}}(q)}\\
    &=\mathfrak{f}^m\overline{n_{\mathfrak{f}}(p)}e^{\overline{f(-p)}}K(p,q)e^{f(-q)}n_{\mathfrak{f}}(q)\\
    &=\mathfrak{f}^m\overline{n_{\mathfrak{f}}(p)}e^{\overline{f(-p)}}\overline{K(-p,-q)}e^{f(-q)}n_{\mathfrak{f}}(q)\\
    &=\overline{K_f(-p,-q)}~.
\end{align*}
Using the symmetry property of the Fourier transform $\overline{\cF_{2m}(\phi)} =\cF_{2m}(\overline{\phi(-\cdot)})$, we obtain 
 $\mathcal{F}_{2m}(K_g)
= \overline{\mathcal{F}_{2m}(K_f)}$. 
Taking squared moduli in \eqref{eq:Fourier-PRP} yields $F(g)=F(f)$.
\end{proof}
\subsection{Uniqueness of the nonlinear forward problem}
\label{sec: uniqueness of nonlinear forward problem}
\cref{lem:symmetry} shows that some assumption is needed which excludes symmetry. 
We will assume that the sample is contained 
``in one half'' of the illuminated area $\domX$ (see \eqref{object domain})
in the following sense (see Fig.~\ref{fig:uniqueness_ass}): 

\begin{assumption}\label{ass:one_side}
Suppose there exists a linear functional $\omega:\Rset^m\to \Rset$ and a constant $c\in\mathbb{R}$ 
 such that the ground truth $f\in L_{\Cset}^{\infty}(\domX)$ 
 satisfies
% \begin{subequations}
\begin{align}
\label{eq:ass_one_side}
&\inf \omega(\esssupp (f-ic))> \frac{1}{2}\left(\inf \omega(\domX) + \sup \omega(\domX)\right).
 \end{align}
\end{assumption}
Recall that the essential support is defined by 
$\esssupp f:=\domX\setminus\bigcup\{U:U\subset \domX$ relatively open,
$f|_U=0\text{ a.e.}\}$.  \Cref{ass:one_side} can be checked 
whenever a bound on the sample's support is known.  However, 
it has the drawback that, for a given beam, the size of the samples  
that can be imaged with uniqueness guarantees is reduced by a factor of $2$. 

\begin{assumption}\label{ass:covariance's supp}
    Assume that $\supp K = \domX\times \domX$ with $\domX$ defined in \eqref{object domain}.
\end{assumption}
Note that \Cref{ass:covariance's supp} excludes a nontrivial block decomposition of $K$ as in \Cref{lem:ambiguity}. Such a block structure is a highly unlikely 
scenario unless multiple independent beams are employed simultaneously; 
nevertheless, 
 \Cref{ass:covariance's supp} may still be violated for other reasons. A situation where \Cref{ass:covariance's supp} is 
highly plausible is the pinhole setting discussed in Section \ref{Forward problem} 
if the coherence length of the incident beam is larger than the diameter of the pinhole. 

We are now in a position to state our main theoretical result: 
\begin{theorem}[uniqueness]\label{theo:uniqueness}
Let \cref{ass:HilbertSchmidt} and \cref{ass:covariance's supp} be satisfied.
Then for $f_1,f_2\in L_{\Cset}^{\infty}(\domX)$ 
satisfying \Cref{ass:one_side}
we have 
\[
F(f_1)|_{\domY\times \domY} = F(f_2)|_{\domY\times \domY}\qquad \Rightarrow\qquad 
e^{f_1-f_2} = e^{i(c_1-c_2)}\quad \text{a.e.}
\]
for some real constants $c_1,c_2\in\Rset$.
\end{theorem}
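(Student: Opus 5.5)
The plan is to reduce the statement to a Fourier phase retrieval problem for the compactly supported kernel $K_f$ from \eqref{eq:Fourier-PRP}, pass from the data region $\domY\times\domY$ to all of $\Rset^{2m}$ by analyticity, and then use \Cref{ass:one_side} to break the twisted symmetry of \Cref{lem:symmetry}. Throughout, $K=\operatorname{cov}[u]$. I am confident about the first step and the last step. The separation step in the middle is the part I expect to be hard, and the concrete handle proposed for it may not be the one that finally works.

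\textbf{Step 1: from data on $\domY\times\domY$ to all frequencies.} Since $\domX$ is bounded, $K_{f_j}\in L^2(\domX\times\domX)$ has compact support, so $\mathcal{F}_{2m}K_{f_j}$ extends to an entire function on $\Cset^{2m}$. Hence
\[
\Phi_j(\xi,\eta):=\mathcal{F}_{2m}K_{f_j}(\xi,\eta)\,\overline{\mathcal{F}_{2m}K_{f_j}(\bar\xi,\bar\eta)}
\]
is entire. It equals $F(f_j)(\xi/\mathfrak{f},-\eta/\mathfrak{f})$ on the open set $\mathfrak{f}\domY\times(-\mathfrak{f}\domY)$. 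Equality there forces $\Phi_1\equiv\Phi_2$. By the convolution theorem, the autocorrelations $A_j:=K_{f_j}\star K_{f_j}$ therefore coincide on $\Rset^{2m}$.

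\textbf{Step 2: reference-plus-object decomposition (main obstacle).} Write $e^{f_j}=e^{ic_j}(1+g_j)$. By \Cref{ass:one_side}, $g_j$ is supported in
\[
\{\omega>\mu+\delta\},\qquad \mu:=\tfrac12(\inf\omega(\domX)+\sup\omega(\domX)),
\]
for some $\delta>0$. Then
\[
K_{f_j}(p,q)=\mathfrak{f}^m n_{\mathfrak{f}}(p)K(p,q)\overline{n_{\mathfrak{f}}(q)}\,(1+g_j(p))(1+\overline{g_j(q)}),
\]
since the constants $e^{\pm ic_j}$ cancel. This is a known reference kernel $K^0$ plus terms in which at least one of $p$, $q$ lies in the "upper half" $\{\omega>\mu+\delta\}$.

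The idea is the classical holography argument: examine $A_j$ on the slab of shifts $z=(z_1,z_2)$ with $\omega(z_1)$ and/or $-\omega(z_2)$ close to extremal values. There the autocorrelation should reduce to cross terms between the known reference part (supported over the whole range $[\inf\omega,\sup\omega]$) and the object part (supported only beyond $\mu$). I would first try the functional $\Omega(p,q)=\omega(p)-\omega(q)$ on $\Rset^{2m}$ together with Titchmarsh-type extreme-support arguments for the convolution, to isolate a term linear in $g_j$ in a region of shifts. This is the step I expect to be the main obstacle, because the supports of the different pieces overlap in $\Omega$ and the separation is not automatic.

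If this clean separation fails, I will fall back on the factorization of $\Phi_j$. Restrict to complex lines in the direction $(\omega,-\omega)$. The factors $\mathcal{F}K_{f_1}$ and $\mathcal{F}K_{f_2}$ are then entire of exponential type with equal modulus on the real line. By Hadamard factorization they differ only by flipping zeros across the real axis, and by an exponential factor. The support bound from \Cref{ass:one_side}, read through Paley–Wiener via the indicator diagram, should then exclude every nontrivial flip, in particular the twisted solution of \Cref{lem:symmetry}. I expect this to give $K_{f_1}=e^{i\theta}K_{f_2}$ for some real $\theta$.

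\textbf{Step 3: conclusion.} From $K_{f_1}=e^{i\theta}K_{f_2}$ and the explicit form of $K_f$, we obtain
\[
e^{f_1(p)-f_2(p)}\,\overline{e^{f_1(q)-f_2(q)}}\,K(p,q)=e^{i\theta}K(p,q).
\]
By \Cref{ass:covariance's supp}, $K\neq0$ on a dense subset of $\domX\times\domX$, so $w:=e^{f_1-f_2}$ satisfies $w(p)\overline{w(q)}=e^{i\theta}$ almost everywhere. Setting $p=q$ gives $|w|^2=e^{i\theta}$, hence $e^{i\theta}=1$ and $|w|=1$. Then $w(p)=w(q)$ almost everywhere, so $w$ is a unimodular constant. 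Outside both supports $w=e^{i(c_1-c_2)}$, which identifies the constant and proves the claim.
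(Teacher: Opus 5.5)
Your Step 1 is the same analytic-continuation step the paper uses. Step 3 correctly derives the conclusion from $K_{f_1}=e^{i\theta}K_{f_2}$, modulo two small points. You need $K\neq 0$ a.e.\ on $\domX\times\domX$, not just on a dense set; this is also how the paper uses \Cref{ass:covariance's supp}. And you cannot set $p=q$ in an identity that holds only a.e., since the diagonal is a null set; use Fubini to see first that $w$ is a.e.\ constant.

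The gap is Step 2, which is the entire content of the theorem. Both routes end in ``should'' and ``I expect'', and the claim that the support bound ``excludes every nontrivial flip'' is exactly what has to be proved. Moreover, the handle you propose cannot work. Every kernel here is Hermitian, $K_f(q,p)=\overline{K_f(p,q)}$. Hence the supports of $K_{f_j}$, of $K_{f_1}-K_{f_2}$ and of the autocorrelations are all invariant under the swap $(p,q)\mapsto(q,p)$, which reverses $\Omega(p,q)=\omega(p)-\omega(q)$. Concretely, the pieces $K^0(p,q)g_j(p)$ and $K^0(p,q)\overline{g_j(q)}$ are one-sided in $\Omega$, but in opposite directions, so their sum has $\Omega$-extent symmetric about the origin. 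Thus the information in \Cref{ass:one_side}, the only thing that breaks the twisted symmetry of \Cref{lem:symmetry}, is invisible in $\Omega$-projections. The same holds for the growth of $\cF_{2m}K_{f_j}$ along complex lines in the direction $(\omega,-\omega)$. The right functional is the swap-invariant $\omega_2(p,q)=\omega(p)+\omega(q)$. In coordinates with $\omega(\domX)=(-\alpha,\alpha)$, the half-space $\{\omega_2<-\alpha+\beta\}$ forces both $p$ and $q$ into the reference half. Your one-variable Hadamard fallback also discards the product structure $e^{f(p)}K(p,q)e^{\overline{f(q)}}$, and it would have to control zero-flips that may differ from line to line; nothing in the proposal addresses this.

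The missing idea is to work with the difference of the data rather than to separate reference and object. Set $K_{\mathrm{sum}}=K_{f_1}+K_{f_2}$, $K_{\mathrm{diff}}=K_{f_1}-K_{f_2}$ and $\Phi:=\overline{K_{\mathrm{sum}}(-\cdot)}*K_{\mathrm{diff}}$. By $|a|^2-|b|^2=\re(\overline{(a+b)}(a-b))$ and the convolution theorem, equality of the data on all of $\Rset^{2m}$ gives $\re\cF_{2m}\Phi\equiv0$. Hence $\Phi=-\overline{\Phi(-\cdot)}$, so $\esssupp\Phi$ is point symmetric.

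Now apply Lions' theorem $\chull\supp(k_1*k_2)=\chull\supp k_1\oplus\chull\supp k_2$ and project with $\omega_2$. Let $h=f_1-f_2$ and $c=c_1-c_2$, and suppose $e^{h-ic}\neq1$ on a set of positive measure. Define $\beta_h:=\inf\omega(\esssupp(e^{h-ic}-1))$, which is $>0$ by \Cref{ass:one_side}, and $\gamma_h:=\inf_{\tilde c}\sup\omega(\esssupp(e^{h-i\tilde c}-1))\ge\beta_h$. Then:
\begin{itemize}
\item $K_{\mathrm{sum}}(-\cdot)$ has $\omega_2$-extent $[-2\alpha,2\alpha]$, because $K_{\mathrm{sum}}(p,p)\neq0$.
\item $K_{\mathrm{diff}}$ vanishes on $\{\omega_2<-\alpha+\beta_h\}$.
\item The upper end of the $\omega_2$-extent of $K_{\mathrm{diff}}$ is at least $\alpha+\gamma_h$. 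This is where $K(p,q)\neq0$ off the diagonal is used: one pairs points where $e^{h-i\tilde c}\neq1$ with points near the top where $e^{h-i\tilde c}=1$.
\end{itemize}
Hence the $\omega_2$-extent of $\chull\esssupp\Phi$ has lower end $\ge-3\alpha+\beta_h$. Point symmetry forces that lower end to be $\le-3\alpha-\gamma_h$, a contradiction. This yields $e^{f_1-f_2}=e^{i(c_1-c_2)}$ directly, without ever proving $K_{f_1}=e^{i\theta}K_{f_2}$.
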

Note that because of the non-uniqueness discussed in  \Cref{rem:nonunique_local_shift}, we cannot conclude that $f_1-f_2\equiv i(c_1-c_2)$ under the given assumptions. This would be possible, however, 
if we additionally assume continuity of $f_1$ and $f_2$.

As a first step, we establish the following lemma.
\begin{lemma}\label{lem:difference contrast and support prop} 
Let $f_{1},f_{2}\in L_{\Cset}^{\infty}(\domX)$. Then
    %\begin{itemize}
       %\item[\emph{(i)}]   
       \begin{align*}
       &\left(F(f_{1})-F(f_{2})\right)(x,y)=(2\pi)^{-\frac{m}{2}}\re\cF_{2m}(\overline{K_{\mathrm{sum}}(-\cdot)}*K_{\mathrm{diff}})(\mathfrak{f}x,-\mathfrak{f}y) \\
       \text{where } 
       &K_{\mathrm{sum}}:=K_{f_1}+K_{f_2},\qquad K_{\mathrm{diff}}:=K_{f_1}-K_{f_2},
       \end{align*}
       $K_{f_{j}}$, $j=1,2$ are introduced in \eqref{eq:Fourier-PRP}, and ``$*$" denotes the convolution operator.
\end{lemma}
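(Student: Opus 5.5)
The plan is to start from the representation \eqref{eq:Fourier-PRP}, $F(f_j)(x,y)=|\widehat{K}_{f_j}(\mathfrak{f}x,-\mathfrak{f}y)|^2$ with $\widehat{K}_{f_j}:=\cF_{2m}(K_{f_j})$. The difference of two squared moduli is handled with the polarization-type identity
\[
|a|^2-|b|^2=\re\big(\overline{(a+b)}\,(a-b)\big),\qquad a,b\in\Cset,
\]
which holds because the cross terms $\overline{a}(-b)+\overline{b}a = 2i\,\im(\overline{b}a)$ are purely imaginary. Applied pointwise with $a=\widehat{K}_{f_1}$ and $b=\widehat{K}_{f_2}$, and using linearity of $\cF_{2m}$, this gives
\[
F(f_1)-F(f_2)=\re\big(\overline{\cF_{2m}K_{\mathrm{sum}}}\cdot \cF_{2m}K_{\mathrm{diff}}\big)
\]
evaluated at $(\mathfrak{f}x,-\mathfrak{f}y)$.

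The remaining step is to turn the product of Fourier transforms into the Fourier transform of a convolution. I will use two standard facts in dimension $2m$:
\begin{itemize}
\item the conjugation symmetry already used in the proof of \Cref{lem:symmetry}, $\overline{\cF_{2m}(\phi)}=\cF_{2m}(\overline{\phi(-\cdot)})$;
\item the convolution theorem for the unitary normalization \eqref{FT}, namely $\cF_{2m}(\phi*\psi)=(2\pi)^{m}\,\cF_{2m}\phi\cdot\cF_{2m}\psi$, since the general constant is $(2\pi)^{d/2}$ with $d=2m$.
\end{itemize}
Applying the first fact to $\phi=K_{\mathrm{sum}}$ and then the second to $\overline{K_{\mathrm{sum}}(-\cdot)}$ and $K_{\mathrm{diff}}$ gives
\[
\overline{\cF_{2m}K_{\mathrm{sum}}}\cdot\cF_{2m}K_{\mathrm{diff}}=(2\pi)^{-m}\,\cF_{2m}\big(\overline{K_{\mathrm{sum}}(-\cdot)}*K_{\mathrm{diff}}\big).
\]
Substituting this into the previous display and evaluating at $(\mathfrak{f}x,-\mathfrak{f}y)$ gives the claimed formula.

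The only delicate points are justification and bookkeeping of constants. Both $K_{\mathrm{sum}}$ and $K_{\mathrm{diff}}$ lie in $L^2(\domX\times\domX)$, because $K\in L^2$ by \Cref{ass:HilbertSchmidt}, the factors $e^{f_j}$ are bounded, and the chirps are unimodular. Since $\domX$ is bounded, these kernels are compactly supported and therefore also in $L^1$. Hence the convolution lies in $L^1\cap L^2$, its Fourier transform is a continuous function, and the convolution theorem holds pointwise; this makes evaluation at individual points $(\mathfrak{f}x,-\mathfrak{f}y)$ meaningful.

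The main obstacle I expect is the normalization constant. The computation above gives $(2\pi)^{-m}$, whereas the statement has $(2\pi)^{-m/2}$. I would recheck the convention for $\cF_{2m}$, and whether the paper intends the $2m$-dimensional transform to be normalized per $m$-dimensional block. If the discrepancy persists, I would note it as a typo, since the constant has no bearing on the support argument that this lemma is meant to prepare.
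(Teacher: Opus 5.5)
Your proposal is correct and follows the paper's own proof: the identity $|z|^2-|w|^2=\re\big(\overline{(z+w)}(z-w)\big)$, the conjugation symmetry $\overline{\cF_{2m}(\phi)}=\cF_{2m}(\overline{\phi(-\cdot)})$, and the convolution theorem, all applied to \eqref{eq:Fourier-PRP}. Your concern about the constant is justified. Since \eqref{eq:Fourier-PRP} together with \eqref{FT} forces $\cF_{2m}$ to carry the factor $(2\pi)^{-m}$, the correct constant is $(2\pi)^{-m}$; the paper's $(2\pi)^{-m/2}$ is asserted without computation, recurs in the linearized theorem, and is harmless because the uniqueness proof only uses that $\re\cF_{2m}\Phi_{f_1,f_2}$ vanishes.
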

\begin{proof} 
    Let $f_{1},f_{2}\in L_{\Cset}^{\infty}(\domX)$. 
    Then by \cref{eq:Fourier-PRP}, the Fourier convolution theorem, the identity $|z|^2-|w|^2=\re(\overline{(z+w)}(z-w))$, $z,w\in\Cset$ and $\overline{\cF_{2m}(K)} =\cF_{2m}(\overline{K(-\cdot)})$, we have
    \begin{align}\label{eq:real-part difference of FO}
    \begin{aligned}
        \left(F(f_{1})-F(f_{2})\right)(x,y)&=\re(\overline{\cF_{2m}(K_{\mathrm{sum}})}\cF_{2m}(K_{\mathrm{diff}})) (\mathfrak{f}x,-\mathfrak{f}y)\\
        &=(2\pi)^{-\frac{m}{2}}\re\cF_{2m}(\overline{K_{\mathrm{sum}}(-\cdot)}*K_{\mathrm{diff}})(\mathfrak{f}x,-\mathfrak{f}y).
    \end{aligned}
    \end{align}
    Note that we have extended $\cF_{2m}(K_{\mathrm{sum}})(\mathfrak{f}x,-\mathfrak{f}y)$ and $\cF_{2m}(K_{\mathrm{diff}})(\mathfrak{f}x,-\mathfrak{f}y)$ from $\domY\times\domY$ to all of $\Rset^{2m}$ by analytic continuation.
    \end{proof}
    
Let $\chull(A)$ denote the convex hull of a set $A \subset \R^m$, i.e., the smallest convex set containing $A$. 
The essential tool in the proof of the main uniqueness theorem is the following identity for the convex hull of the support of 
convolutions due to J.-L.~Lions
(\cite[Thm.~7]{lions1952supports}): if $k_{1}$ and $k_{2}$ are two distributions with compact support, then
\begin{align}\label{eq:chull_conv}
\chull \supp (k_1*k_2) = \chull \supp(k_1) \oplus \chull 
\supp(k_2),
\end{align}
where ``$\oplus$'' denotes \emph{Minkowski sum} and is defined by  $A\oplus B:=\{a+b:a\in A,b\in B\}$ for $A,B \subset \Rset^m$.

\begin{proof}[Proof of Theorem~\ref{theo:uniqueness}]
We may choose the coordinate system such that $\omega(x)=x_1$ and
\[
\alpha := \sup \omega(\domX) = - \inf \omega(\domX).
\]
Suppose that $F(f_1)|_{\domY\times\domY}=F(f_2)|_{\domY\times\domY}$. 
Set $h:=f_1-f_2$ and $c:=c_1-c_2$. We have to show that 
\begin{align}\label{eq:main_conclusion}
e^{h-ic}=1\quad \text{a.e.}
\end{align}
We assume the contrary that the essential support 
$\esssupp(e^{h-ic}-1)$ is not empty and proceed in several steps
to arrive at a contradiction. 
\paragraph*{Step 1:} The quantities
\begin{subequations}
\label{eq: definition beta, gamma}
\begin{align}
\beta_h&:= \inf\left\{p_1:p\in \esssupp (e^{h-ic}-1)\cap \domX\right\},\\
\gamma_h&:= \inf_{\tilde{c}\in\R}\sup\left\{p_1:p\in \esssupp (e^{h-i\tilde{c}}-1)\cap \domX \right\}
\end{align}
\end{subequations}
satisfy
\begin{subequations}
\begin{align}
\label{eq:beta_g0}
&\beta_h>0,\\
\label{eq:inf_gamma_attained}
&\exists \tilde{c}\in\R: 
\gamma_h=\sup\left\{p_1:p\in \esssupp (e^{h-i\tilde{c}}-1)\cap \domX \right\}\\
\label{eq:gamma_ge_beta}
&\gamma_h\ge \beta_h.
\end{align}
\end{subequations}
\Cref{eq:beta_g0} is a consequence of \Cref{ass:one_side}. 
\eqref{eq:inf_gamma_attained} holds true for any $\tilde{c}\in\R$ if 
$\gamma_h =\alpha$. Suppose that $\gamma_h<\alpha$. Then there 
exists $\hat{c}\in\R$ such that $\sup\left\{p_1:p\in \esssupp (e^{h-i\hat{c}}-1)\cap \domX \right\}<\frac{1}{2}(\gamma_h+\alpha)$. 
Note that the sets 
\begin{align}\label{eq:defi_domXt}
   \domX_t:=\{p\in\domX:p_1>t\},\qquad t\in (-\alpha,\alpha) 
\end{align}
are open since $\domX$ is open, and they have positive measure by the definition of $\alpha$. 
As $e^{h-i\hat{c}}=1$ a.e. on $\domX_{(\gamma_h+\alpha)/2}$, 
$\hat{c}$ is uniquely determined 
up to integer multiples of $2\pi$, and $e^{i\tilde{c}}=e^{i\hat{c}}$.
\eqref{eq:gamma_ge_beta} follows from our assumption that \eqref{eq:main_conclusion} is wrong if $e^{i\tilde{c}}\neq e^{i c}$. 
Otherwise, it is obvious.

\begin{figure}[htbp]
    \centering
     \begin{subfigure}{0.48\linewidth}
        \centering
        \includegraphics[width=\linewidth]{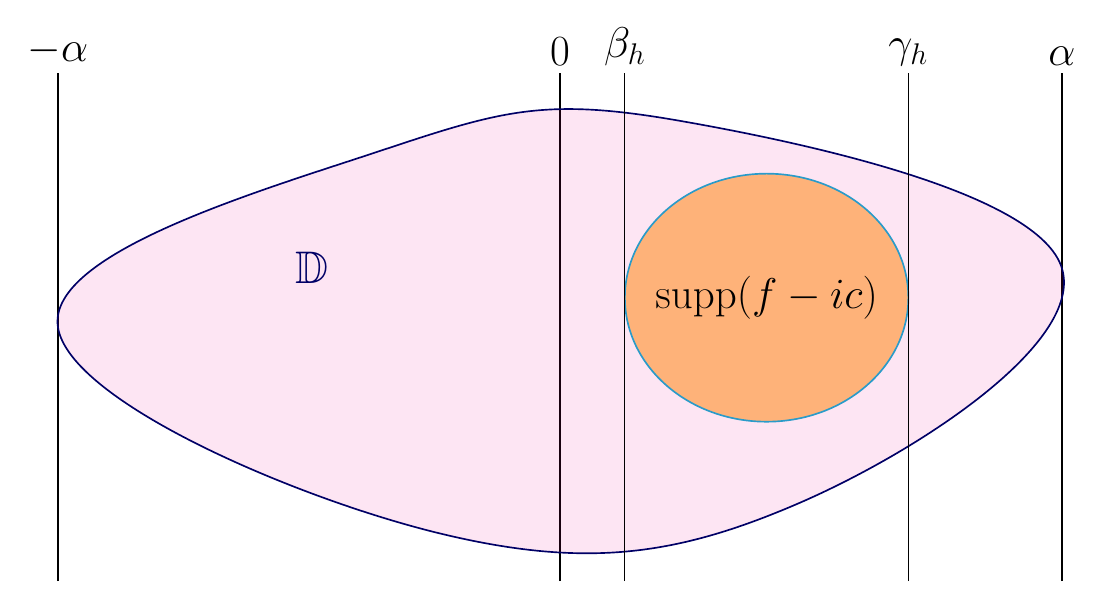}
        \caption{\label{fig:uniqueness_ass} Assumption \ref{ass:one_side} in the setting of the proof. After replacing $f$ with $h$, the image shows the definition of $\beta_h$ and $\gamma_h$ according to Eq.~\ref{eq: definition beta, gamma}.}
    \end{subfigure}
    \begin{subfigure}{0.50\linewidth}
        \centering
        \includegraphics[width=\linewidth]{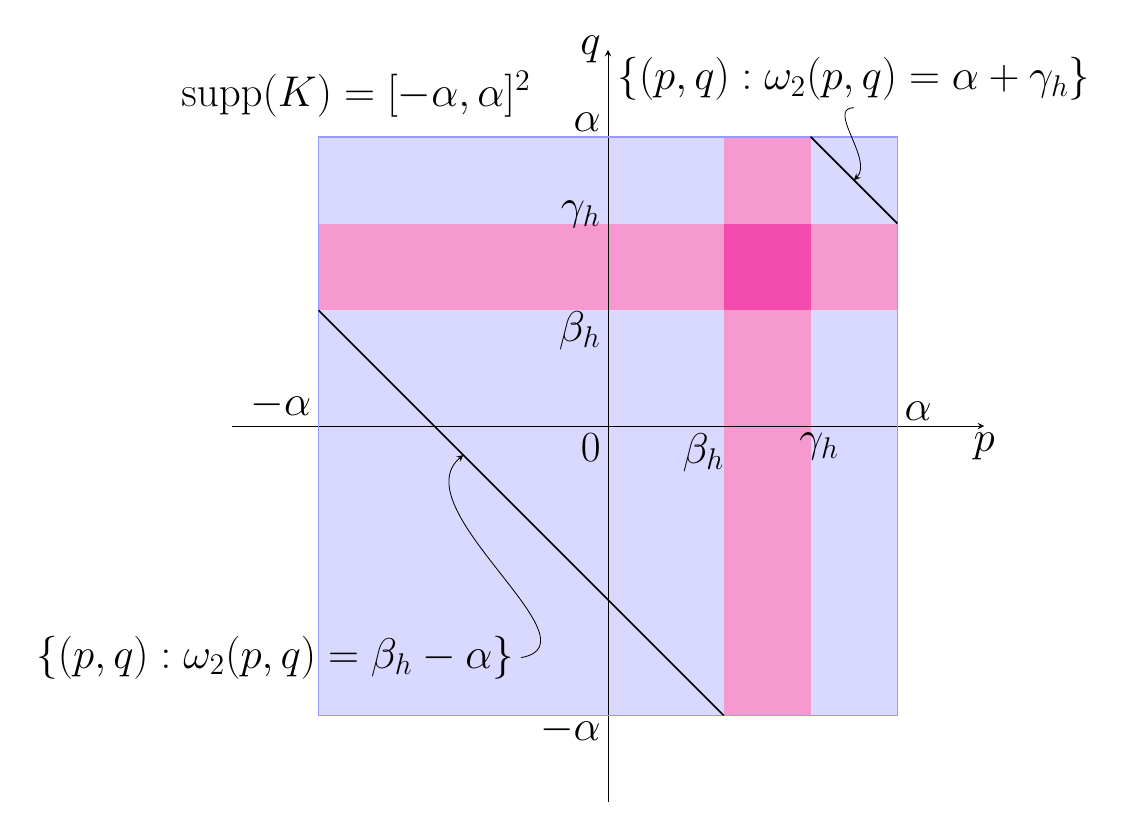}
        \caption{Steps 2 -- 4 of the proof sketched for $m=1$.}
    \end{subfigure}
    \hfill
    \caption{Geometric setting and constructions in the proof of Theorem \ref{theo:uniqueness}.}
    \label{fig:uniqueness-proof}
\end{figure}

\paragraph*{Step 2:}
Note that $K_{\text{diff}}(p,q) =  \mathfrak{f}^{m} n_{\mathfrak{f}}(p)K(p,q)~\overline{n_{\mathfrak{f}}(q)} \left(e^{f_1(p)+\overline{f_1(q)}}-e^{f_2(p)+\overline{f_2(q)}}\right)$
with the notation in \Cref{lem:difference contrast and support prop}.
To bound $\esssupp(K_{\text{diff}})$,  note that $\esssupp K_{\text{diff}}\subset \domX\times\domX$ and that for all $p,q\in\domX$, using \Cref{ass:covariance's supp}, we have the equivalences
\begin{align}\label{eq:Kdiff_equiv}
\begin{aligned}
K_{\text{diff}}(p,q)= 0&\Leftrightarrow
e^{f_1(p)+\overline{f_1(q)}}-e^{f_2(p)+\overline{f_2(q)}}= 0 \\
&\Leftrightarrow
f_1(p)+\overline{f_1(q)} - f_2(p) - \overline{f_2(q)} = h(p)+ \overline{h(q)}\in 
2\pi i \mZ\\
&\Leftrightarrow
\re h(p)= -\re h(q) \text{ and }
\im h(p)-\im h(q)\in 2\pi \mZ.
\end{aligned}
\end{align}

\paragraph*{Step 3:} We show that 
    \begin{align*}
\inf\omega_2(\esssupp K_{\text{diff}})\geq -\alpha+\beta_h
\quad \text{with }
        \omega_2(p,q)&:=\omega(p)+\omega(q)=p_1+q_1,~ p,q\in \mathbb{R}^m.
    \end{align*} 
Equivalently, we show that for almost all $(p,q)\in \R^{2m}$ the implication
\begin{align}\label{eq:Kdiff_lower}
\omega_2(p,q) < -\alpha + \beta_h \quad \Rightarrow\quad
K_{\text{diff}}(p,q) =0
\end{align}
holds true. We only need to consider $p,q$ with $p_1,q_1\in [-\alpha,\alpha]$. 
Then $p_1+q_1=\omega_2(p,q) < -\alpha + \beta_h$ implies $p_1\leq \beta_h$ 
and $q_1\leq \beta_h$. By the definition of $\beta_h$ we have 
$\re h(p)=0$ and $\im h(p)-c\in 2\pi\mZ$ and similarly for $q$ except for a nullset. By \eqref{eq:Kdiff_equiv} this implies $K_{\text{diff}}(p,q)=0$.

\emph{Step 4:} We show that
\begin{align}\label{eq:upper nonlinear}
\sup \omega_2(\esssupp(K_{\text{diff}})) \ge \gamma_h + \alpha.
\end{align}
We first consider the case $\gamma_h<\alpha$. Choose $\varepsilon>0$ such 
that $\gamma_h<\alpha-\varepsilon$. As $\gamma_h<\alpha-\varepsilon$, we have $e^{h(q)-i\tilde{c}}=1$ for a.a.\ $q\in\domX_{\alpha-\varepsilon}$ with $\tilde{c}$ from \eqref{eq:inf_gamma_attained}.  
By the definition of $\gamma_h$, the set $\cO:=\{p\in \domX_{\gamma_h-\varepsilon}\setminus\domX_{\gamma_h}:e^{h(p)-i\tilde{c}}\neq 1\}$ has 
positive measure. We have 
$\re h(p)+\re h(q)=\re h(p)\neq 0$ or $\im h(p)-\im h(q) = \im h(p)-\tilde{c} \notin 2\pi\mZ$ for all $p\in\cO$ and a.a.\ $q\in\domX_{\alpha-\varepsilon}$. 
By \eqref{eq:Kdiff_equiv} this shows that $K_{\text{diff}}(p,q)\neq 0$ for 
a.a.\ $(p,q)$ in the set $\cO\times \domX_{\alpha-\varepsilon}$, which has positive measure. This entails that $\omega_2(\esssupp(K_{\text{diff}}))>(\gamma_h-\varepsilon)+(\alpha-\varepsilon)$. As $\varepsilon>0$ can be arbitrarily small, 
this proves the claim for $\gamma_h<\alpha$. Now consider the case $\gamma_h=\alpha$ and choose $\varepsilon>0$.  
If $\cO_{\text{R}}:=\{p\in\domX_{\alpha-\varepsilon}:\re h(p)\neq 0\}$ has positive measure, then 
$(p,p)\in \esssupp K_{\text{diff}}$ for all $p\in\cO_{\text{R}}$ by 
\eqref{eq:Kdiff_equiv}. Hence, $\omega_2(p,p)\geq 2\alpha-2\varepsilon$, 
proving \eqref{eq:upper nonlinear} as $\varepsilon>0$ is arbitrary. 
Now assume that $\cO_{\text{R}}$ is a nullset. If there exists 
$(p,q)\in (\esssupp K_{\text{diff}}) \cap (\domX_{\alpha-\varepsilon}\times \domX_{\alpha-\varepsilon})$ with $K_{\text{diff}}(p,q)\neq 0$,  
then we again obtain $\omega_2(p,p)\geq 2\alpha-2\varepsilon$. 
Otherwise, we have $e^{i\im h(p)}=e^{i\tilde{c}}$ for 
almost all $p\in \domX_{\alpha-\varepsilon}$ and some $\tilde{c}\in\R$, which  
leads to the contradiction $\gamma_h\leq \alpha-\varepsilon$.

\paragraph*{Step 5:} 
Combining the last two steps and the fact that the convex hull of a set 
is the intersection of all half space containing it, yields 
\[
\omega_2(\chull \esssupp(K_{\text{diff}})) \subseteq [-\alpha+\beta_h, 2\alpha], \quad \text{and} \quad \gamma_h+\alpha \in \omega_2(\chull \esssupp(K_{\text{diff}})).
\]

\paragraph*{Step 6:}
Define $\Phi_{f_{1},f_{2}}:=\overline{K_{\mathrm{sum}}(-\cdot)}*
K_{\mathrm{diff}}$.    
Since the 
    Fourier-Laplace transform of a compactly supported 
    function is an entire function and since $\esssupp K_{f} =\supp K=\domX\times \domX$ 
    is bounded, 
$F(f_{1})|_{\domY\times\domY}$ and $F(f_{2})|_{\domY\times\domY}$ can be analytically extended to all of $\Rset^{2m}$. Therefore,  
\Cref{lem:difference contrast and support prop} implies that $\re \mathcal{F}_{2m}\Phi_{f_1,f_2}=0$. Hence, 
$\Phi_{f_1,f_2}$ is an anti-symmetric Hermitian function, i.e., $\overline{\Phi_{f_1,f_2}(-\cdot)}=-\Phi_{f_1,f_2}(\cdot)$.
Note that 
\[
K_{\text{sum}}(p,p)=\mathfrak{f}^mK(p,p)(e^{2\re f_1(p)}+e^{2\re f_2(p)})
\neq 0\qquad \text{for }p\in \domX,
\]
so $\omega_2(\esssupp (K_{\text{sum}}(-\cdot)) = [-2\alpha,2\alpha]$.  
Now, invoking the identity \eqref{eq:chull_conv}, noting that the support of an $L_{\Cset}^{\infty}$ function as a distribution 
corresponds to its essential support,  we find that 
\begin{subequations}
\begin{align}\label{eq:aux_ch_intv_a}
&\begin{aligned}
\omega_2(\chull \esssupp \Phi_{f_1,f_2}) &= \omega_2(\chull \esssupp (K_{\text{sum}}(-\cdot)) + \omega_2(\chull \esssupp(K_{\text{diff}}))\\
&\subset [-3\alpha + \beta_h, 4\alpha]\quad \mbox{and}
\end{aligned}\\
\label{eq:aux_ch_intv_b}
 &3\alpha + \gamma_h \in \omega_2(\chull \esssupp \Phi_{f_1,f_2}).
\end{align}
\end{subequations}
\paragraph*{Step 7:} 
As $\overline{\Phi_{f_1,f_2}}$ is anti-symmetric, 
$\esssupp \Phi_{f_1,f_2}$ and $\omega_2(\chull \esssupp \Phi_{f_1,f_2})$ 
must be point symmetric  with respect to the origin. 
In view of \eqref{eq:aux_ch_intv_b} this implies 
$-3\alpha-\gamma_h\in \omega_2(\chull \esssupp \Phi_{f_1,f_2})$. 
Using \eqref{eq:beta_g0} and \eqref{eq:gamma_ge_beta} we arrive at a 
contradiction to \eqref{eq:aux_ch_intv_a}. 
\end{proof}

We also state a corresponding uniqueness result for the linearized problem:
\begin{theorem}
    Let \cref{ass:HilbertSchmidt} and \cref{ass:covariance's supp} be satisfied and let $f,h\in L_{\Cset}^{\infty}(\domX)$ with 
    $h$ satisfying \Cref{ass:one_side}. Then 
\[
F'[f]h|_{\domY\times \domY} = 0\qquad \Rightarrow\qquad 
h= ic\quad \text{a.e.}
\]
for some real constant $c\in\Rset$.
\end{theorem}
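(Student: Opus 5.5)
We mirror the proof of Theorem~\ref{theo:uniqueness}, with the difference kernel $K_{\mathrm{diff}}$ replaced by the kernel of the linearized operator. By Lemma~\ref{lemma-Frechet diff} and \eqref{eq:Fourier-PRP}, we have
\[
F'[f]h(x,y)=2\re\bigl(\overline{\cF_{2m}(K_f)}\,\cF_{2m}(L_h)\bigr)(\mathfrak{f}x,-\mathfrak{f}y),
\]
where
\[
L_h(p,q):=(h(p)+\overline{h(q)})\,K_f(p,q)
\]
is the kernel of $\mathfrak{f}^m$ times the chirp-conjugated $M_{e^f}\mathcal{R}(h)M_{e^f}^{\ast}$. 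As in Lemma~\ref{lem:difference contrast and support prop}, the convolution theorem rewrites this as $(2\pi)^{-m/2}\re\cF_{2m}\Psi(\mathfrak{f}x,-\mathfrak{f}y)$ with $\Psi:=\overline{K_f(-\cdot)}*L_h$. Both kernels are compactly supported in $\domX\times\domX$, so the Fourier transforms are entire. Vanishing on the open set $\domY\times\domY$ therefore gives $\re\cF_{2m}\Psi\equiv0$ on $\Rset^{2m}$. Hence $\Psi$ is anti-Hermitian, $\overline{\Psi(-\cdot)}=-\Psi$, and $\esssupp\Psi$ is point symmetric about the origin.

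Next we locate the support of $L_h$. By Assumption~\ref{ass:covariance's supp} and $e^{f}\neq0$, we have $L_h(p,q)=0$ iff $h(p)+\overline{h(q)}=0$, i.e.\ iff $\re h(p)=-\re h(q)$ and $\im h(p)=\im h(q)$; this is the linear analogue of \eqref{eq:Kdiff_equiv} without the $2\pi\mZ$. Normalize as before: $\omega(x)=x_1$, $\domX\subset\{|x_1|<\alpha\}$ with $\alpha$ attained as sup and $-\alpha$ as inf. Assume $h\neq ic$ a.e.\ for every real $c$, and let $c$ be the constant of Assumption~\ref{ass:one_side} for $h$. Define
\[
\beta:=\inf\{p_1:p\in\esssupp(h-ic)\}>0,\qquad
\gamma:=\inf_{\tilde c}\sup\{p_1:p\in\esssupp(h-i\tilde c)\}.
\]
As in Step~1, the infimum defining $\gamma$ is attained. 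Moreover $\gamma\ge\beta$: if $\gamma<\alpha$, the constant $\tilde c$ is pinned down by $h=i\tilde c$ on the nonempty open set $\domX_{(\gamma+\alpha)/2}$, and $\tilde c=c$ is excluded by the contrary hypothesis.

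Steps~3 and~4 then carry over verbatim, and are simpler because no $2\pi$-ambiguity arises. If $p_1+q_1<-\alpha+\beta$, then $p_1,q_1\le\beta$, so $h(p)=h(q)=ic$ and $h(p)+\overline{h(q)}=0$; hence $\inf\omega_2(\esssupp L_h)\ge-\alpha+\beta$. For the upper bound we show $\sup\omega_2(\esssupp L_h)\ge\gamma+\alpha$, distinguishing two cases.
\begin{itemize}
\item[(a)] If $\gamma<\alpha$, pair the positive-measure set $\{p:\gamma-\varepsilon<p_1\le\gamma,\ h(p)\ne i\tilde c\}$ with $\domX_{\alpha-\varepsilon}$, on which $h=i\tilde c$. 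On this product $L_h\neq0$.
\item[(b)] If $\gamma=\alpha$, either $\re h\neq0$ on a positive-measure subset of $\domX_{\alpha-\varepsilon}$, which yields diagonal points $(p,p)$ in the support, or $\re h=0$ there. In the latter case, $L_h$ vanishing a.e.\ on $\domX_{\alpha-\varepsilon}^2$ would force $\im h$ to be a.e.\ constant there, contradicting $\gamma=\alpha$.
\end{itemize}
Since $K_f(p,p)\neq0$ on $\domX$, we have $\omega_2(\chull\esssupp K_f(-\cdot))=[-2\alpha,2\alpha]$. Lions' identity \eqref{eq:chull_conv} then gives
\[
\omega_2(\chull\esssupp\Psi)\subset[-3\alpha+\beta,4\alpha]\quad\text{and}\quad 3\alpha+\gamma\in\omega_2(\chull\esssupp\Psi).
\]
Point symmetry yields $-3\alpha-\gamma\in\omega_2(\chull\esssupp\Psi)$, which contradicts $-3\alpha-\gamma<-3\alpha+\beta$ because $\beta>0$ and $\gamma\ge\beta$.

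The main obstacle is the upper-support bound in case~(b), $\gamma=\alpha$. There one must argue carefully with essential supports, so that ``$L_h=0$ a.e.\ on $\domX_{\alpha-\varepsilon}^2$'' really forces $h$ to be a.e.\ equal to a purely imaginary constant on $\domX_{\alpha-\varepsilon}$. I would handle this by noting that $h(p)=-\overline{h(q)}$ for a.e.\ pair $(p,q)$ makes $h$ a.e.\ constant by Fubini, and setting $p=q$ shows this constant is purely imaginary. A second point to check is that the transform of $L_h$ still factors through $K_f$ in the convolution form. This holds because $L_h=(\mathfrak{f}^m\,\text{chirps})\cdot(h\oplus\bar h)e^{f}Ke^{\bar f}$, so it is compactly supported with an entire Fourier transform.
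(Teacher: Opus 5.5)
Your proposal is correct and is essentially the paper's proof. The paper also replaces $K_{\mathrm{diff}}$ by the linearized kernel and $K_{\mathrm{sum}}$ by $2K_f$, uses $L_h(p,q)=0\Leftrightarrow \re h(p)=-\re h(q),\ \im h(p)=\im h(q)$ in place of \eqref{eq:Kdiff_equiv}, and reruns Steps 1--7 with Lions' support identity and the point symmetry of $\Psi$; your $L_h=(h(p)+\overline{h(q)})K_f$ even keeps the $e^{f}$ factors that the paper's displayed $K'_{f,h}$ omits.

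One justification needs fixing: $\gamma\ge\beta$ does not follow from $\tilde c=c$ being excluded (it need not be). It follows because, if $\gamma<\beta$, then $h=ic$ a.e.\ on $\{p_1<\beta\}$ and $h=i\tilde c$ a.e.\ on $\{p_1>\gamma\}$; these sets cover $\domX$ and overlap in a set of positive measure (for connected $\domX$), which forces $h=ic$ a.e.\ on $\domX$, contrary to your hypothesis.
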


\begin{proof} We only discuss the main differences to the proof of  
\Cref{theo:uniqueness}. We have 
\begin{align*}
(F'[f]h)(x,y) &= 2(2\pi)^{-m/2} \re \cF_{2m}\left(\overline{K_f(-\cdot)}*
K'_{f,h}\right)(\mathfrak{f}x,-\mathfrak{f}y)\\ 
&\mbox{with}\quad
K'_{f,h}(p,q):=\mathfrak{f}^m n_{\mathfrak{f}}(p)K(p,q)\left(h(p) + \overline{h}(q)\right)
 n_{\mathfrak{f}}(q).
\end{align*}
Again, $F'[f]h$ has an analytic extension from $\domY\times \domY$ to 
$\Rset^{2m}$. Under \Cref{ass:covariance's supp} we have 
\[
K'_{f,h}(p,q)=0 \quad \Leftrightarrow\quad 
\re h(p)=-\re h(q) \land  \im h(p) = \im h(q)
\]
instead of \eqref{eq:Kdiff_equiv}, and we define 
$\beta_h:=\inf\{p_1:p\in\esssupp (h-ic)\cap \domX\}$ and 
$\gamma_h:=\inf_{\tilde{c}} \sup\{p_1:p\in\esssupp (h-i\tilde{c})\cap \domX\}$. 
The remainder of the proof proceeds along the lines of the proof of 
\Cref{theo:uniqueness} with $K'_{f,h}$ in the place of $K_{\text{diff}}$ 
and $2K_f$ in the place of $K_{\text{sum}}$.
\end{proof}

\section{Reconstructions}\label{Reconstructions}
In this section, our focus is on effective numerical regularization methods to reconstruct jointly the phase and the absorption contrast.  Our numerical implementation unfolds in two steps: (i) discretization and noise model, (ii) iterative regularization method. 
\subsection{Discretization and noise model}\label{subsec:Discretization}
We assume that the illuminated area $\domX$ is contained in the square $[-1,1]^2$  
%\todo[inline]{Check chosen size of $\domX$. Important for Fresnel number!}
discretize the complex-valued object $f\in\Cset^{M_f\times M_f}$ as an equidistant grid  dividing it into $M_f\times M_f$ pixels. 
Similarly, the measurement 
domain $\domY$ is divided into $M_I\times M_I$ equidistant 
pixels. We implement the Fresnel propagator $\cD$ in 
the convolution form \eqref{Fresnel op} by FFT  
using zero-padding of $\domX$ to reduce periodization artifacts 
and restrictions in Fourier space to adapt the distance and number of detector pixels. 

The discrete forward operator is then given by 
\begin{equation}\label{GI-discrete}
F:\Cset^{M_f\times M_f}\to\Rset^{M_I^2\times M_I^2},~~~F(f):=\big|
\cD\diag(e^{f})\bCov[u]\diag(e^{\overline{f}})\cD^{\ast}\big|^2,
\end{equation}
where $\diag(e^f)\in \Cset^{M_f^2\times M_f^2}$ is the diagonal matrix with diagonal $e^f$. 

To generate synthetic data, we draw $\Nfr$ samples $u_{n}$, $n=1,\dots,\Nfr$ of the Gaussian random field $u$ and compute the corresponding intensities
\[
    I_{f,n}=|\cD \diag(e^{f}) u_{n}|^{2},~~f\in\Cset^{M_f\times M_f},~~n=1,\dots,\Nfr.
\] 
Observed intensities are affected by photon \emph{shot noise} described by a \emph{normalized Poisson process}  modeled as
\begin{equation}\label{eq:primary_data}
    I_{f,n}^{\text{obs}}\sim\frac{1}{t}\Pois(tI_{f,n}),\qquad n=1,\dots,\Nfr,
\end{equation}
where the parameter $ t > 0 $ can be
interpreted as the \emph{observation time} (proportional to the expected \emph{number of photon counts}) per image. The scaling factor $ \frac{1}{t} $ ensures that $\E[I_{f,n}^{\textrm{obs}}|u_n]=I_{f,n}$  (see \cite{hohage2016inverse}).

The $N$ intensity images of size $M_I\times M_I$ described in \eqref{eq:primary_data} are samples of a discrete Cox process and model our primary data. From these 
data we could in principle compute an estimator of the 
noise-free intensity correlations $\cov[I_f]$, which served 
as input data of the inverse problem in the previous sections by 
\[
    \widehat{\operatorname{cov}}_{N,t}[I_{f}]:=\frac{1}{\Nfr}\sum_{n=1}^{\Nfr}(I_{f,n}^{\text{obs}}-\overline{I}_{f,\Nfr}^{\text{obs}})(I_{f,n}^{\text{obs}}-\overline{I}_{f,\Nfr}^{\text{obs}})^{\top},~~\text{with}~~\overline{I}_{f,\Nfr}^{\text{obs}}:=\frac{1}{\Nfr}\sum_{n=1}^{\Nfr}I_{f,n}^{\text{obs}}.
\]
Correction terms on the diagonal due to the Poisson process  are discussed
in \cref{app:Cox-process}, but they are negligible in our setting since they scale like $\frac{1}{t}$ for large count rates. 

Since the size of  $\widehat{\operatorname{cov}}_{N,t}[I_{f}]$ is proportional 
to $M_I^4$, the computation of $\widehat{\operatorname{cov}}_{N,t}[I_{f}]$ 
severely limits the size of $M_I$ on available computational resources 
and impairs the practicality of the method. Therefore, it will be 
crucial to avoid an explicit computation of $\widehat{\operatorname{cov}}_{N,t}[I_{f}]$ and only work with the primary data $I_{f,n}^{\text{obs}}$.

\subsection{Iterative regularization method}\label{Avoid covariance fitting}

We wish to solve the discrete nonlinear inverse problem 
\begin{equation}\label{eq:discrete Inv-Probl}
     F(f)\approx \widehat{\operatorname{cov}}_{N,t}[I_{f}].
\end{equation}
To improve stability of reconstructions it is typically advantageous to 
incorporate any available prior information on the solution into the reconstruction process. 
In the following we will only consider the constraint that phase contrast for x-rays in non-positive and absorption contrast is non-negative~\cite{maretzke2019inverse} 
%\todo{add ref.}
, i.e., 
$f$ belongs to the set 
\begin{equation}\label{non-negativity cons}
    \mathbb{K}_{+}:=\{f\in\Cset^{M_f\times M_f}: -\re f\geq 0~\text{and}~\im f\geq 0\ \text{pointwise} \}.
\end{equation}
If additional support constraints are available, they are also easy to implement in this context. 
To solve \cref{eq:discrete Inv-Probl} both constraints are incorporated to the following Tikhonov regularization problem: 
\begin{equation}\label{eq:Tikh-minimization prob}
    \Bar{f}\in\argmin \left[\mathcal{S}(F(f))+\alpha \left(\rchi_{\mK_{+}}(f)+\norm{f-f_{0}}_{2}^{2}\right)\right],\qquad
    \mathcal{S}(g):=\frac{1}{2}\norm{g-\widehat{\operatorname{cov}}_{N,t}[I_{f}]}_{\HS}^{2}
\end{equation}
Here the norm $\norm{\cdot}_{\HS}$ in the \emph{data fidelity term} 
 denotes the Hilbert-Schmidt (or Frobenius) norm, $\alpha>0$ signifies the regularization parameter,  
 $f_{0}$ in the penalty term is an initial guess,
 and $\rchi_{\mK_{+}}$ denotes the \emph{characteristic function} of the set $\mK_{+}$, i.e., $\mK_+(f):=0$ if $f\in \mK_+$ and 
$\mK_+(f)=\infty$ else. 

\subsubsection*{Avoiding covariance fitting}
If we wish to minimize the Tikhonov functional \eqref{eq:Tikh-minimization prob} by gradient methods, we need to compute, in particular, the 
gradient of the data fidelity term, which is given by
\[
\grad (\mathcal{S}\circ F)(f) =  F'[f]^{\ast} F(f) - F'[f]^{\ast}\widehat{\operatorname{cov}}_{N,t}[I_{f}].
\]
If we wish to use Gau{\ss}-Newton type methods, i.e., approximate 
$\mathcal{S}(F(f))$ by $\mathcal{S}(F(f_n)+F'[f_n](f-f_n))$ in an iterative 
process, we additionally 
need to evaluate terms of the form $F'[f_n]^{\ast}F'[f_n]h$. 

However, evaluating 
any of these terms in a straightforward manner, in particular 
pre-processing intensities $I_{f,n}^{\text{obs}}$ to compute $\widehat{\operatorname{cov}}_{N,t}[I_{f}]$ prohibitively increases data dimensionality. In X-ray holography imaging, data sets can comprise millions pixels, which would result in the order of $10^{12}$ independent two-point correlations! 
This is  way too large to be stored in fast memory on most machines.  

The crucial point to make this computable for large problem instances is to assume that $\bCov[u]$ has small 
rank $\rank\ll m$  
and to avoid any 
$m\times m$ matrices and in particular any elements 
of the image space of the forward map. We need at most 
$m\times\rank^2$ or $\rank^2\times m$ matrices. 

This can be achieved by the following 
lemma providing a factorization 
of the pointwise squared modulus 
of a product of low-rank matrices 
into a product of low-rank matrices: 
\begin{lemma}\label{lemm:squared_modulus_factorization}
Let $\rank,m\in\Nset$ and  consider the mapping
\[
\tau:\C^{m\times\rank}\longrightarrow\C^{m\times(\rank\times\rank)},\qquad
[\tau(B)]_{i,p,q}:=B_{ip}\overline{B}_{iq}
\]
Then, with $|\cdot|^2$ applied element-wise, we have
\begin{align}\label{eq:squared_modulus_factorization}
|BC^{\ast}|^2=\tau(B)\tau(C)^{\ast},~~~~\text{for all}~~B,C\in\C^{m\times\rank},
\end{align}
\end{lemma}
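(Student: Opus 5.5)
The plan is to verify \eqref{eq:squared_modulus_factorization} entrywise, expanding both sides as finite sums over the rank index. We interpret $\tau(B)$ as an $m\times \rank^2$ matrix, flattening the index pair $(p,q)$ into a single column index, so that $\tau(B)\tau(C)^{\ast}$ is an $m\times m$ matrix. The identity is purely algebraic, and the only step needing care is consistent bookkeeping of conjugations.

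First, fix $i,j\in\{1,\dots,m\}$. The $(i,j)$ entry of $BC^{\ast}$ is $\sum_{p=1}^{\rank}B_{ip}\overline{C_{jp}}$. Taking the squared modulus $z\overline{z}$ and writing the conjugate with a second summation index $q$ gives
\[
|(BC^{\ast})_{ij}|^2=\sum_{p=1}^{\rank}\sum_{q=1}^{\rank}B_{ip}\overline{C_{jp}}\,\overline{B_{iq}}\,C_{jq}.
\]

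Second, compute the right-hand side. By definition of the adjoint, $(\tau(C)^{\ast})_{(p,q),j}=\overline{[\tau(C)]_{j,p,q}}=\overline{C_{jp}}\,C_{jq}$. Hence
\[
[\tau(B)\tau(C)^{\ast}]_{ij}=\sum_{p,q}B_{ip}\overline{B_{iq}}\;\overline{C_{jp}}\,C_{jq}.
\]
This coincides termwise with the expression from the first step, since the factors are complex scalars and commute. This proves the claim for all $B,C\in\C^{m\times\rank}$.

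There is no real obstacle here. The point worth emphasizing afterwards is the computational consequence: $|BC^{\ast}|^2$ is an $m\times m$ matrix of rank at most $\rank^2$, factored through $m\times\rank^2$ matrices. Applied with $B=C=\cD\diag(e^f)U$, where $\bCov[u]=UU^{\ast}$, this yields $F(f)$ in factored form.
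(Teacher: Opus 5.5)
Your proposal is correct and follows essentially the same route as the paper: both proofs expand $|(BC^{\ast})_{ij}|^2$ as a double sum over $p,q$ and identify it term by term with $\sum_{p,q}[\tau(B)]_{i,p,q}\overline{[\tau(C)]_{j,p,q}}$, which is the $(i,j)$ entry of $\tau(B)\tau(C)^{\ast}$. Your remark that $(p,q)$ is flattened into a single column index makes explicit the matrix interpretation that the paper leaves implicit.
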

\begin{proof}
For all $i,j=1,\cdots,m$, we have
\begin{align*}
|BC^{\ast}|_{ij}^{2}&=(BC^{\ast})_{ij}\odot\overline{(BC^{\ast})_{ij}}\notag\\
&=\paren{\sum_{p=1}^rB_{ip}\overline{C_{jp}}}\cdot\paren{\sum_{q=1}^r\overline{B_{iq}}C_{jq}}\notag\\
&=\sum_{p=1}^r \sum_{q=1}^r (B_{ip}\overline{B_{iq}})(\overline{C_{jp}}C_{jq})\notag\\&=\sum_{p=1}^r \sum_{q=1}^r [\tau(B)]_{i,p,q}[\overline{\tau(C)}]_{j,p,q}\notag.
\qed
\end{align*}
\end{proof}
To see how Lemma \ref{lemm:squared_modulus_factorization}
allows to avoid $m\times m$ matrices, let us introduce 
the function
\[
\Theta: \C^{m\times (\rank\times\rank)} 
\to \C^{m\times m}, \qquad 
\Theta(E):=EE^{\ast},
\]
for the matrix product such that \eqref{eq:squared_modulus_factorization} with $B=C$ becomes
\[
|BB^{\ast}|^2 = \Theta(\tau(B)).
\]
\begin{lemma}\label{Theta-lemma}
The matrix product $\Theta$ introduced above 
 \begin{itemize}
     \item [(i)] is Fr\'echet-differentiable with derivative 
     \[
     \Theta'\left[E\right]
     \begin{pmatrix}\delta E\end{pmatrix}=
   E(\delta E)^{\ast}+(\delta E)E^{\ast},\qquad 
     E, \delta E\in\C^{m\times (\rank\times\rank)};
     \]
     \item[(ii)] the adjoint $\Theta'\left[E\right]^{\ast}:\C^{m\times m}\longrightarrow\C^{m\times (\rank\times\rank)}  $ is given by
     \[
     \Theta'\left[E\right]^{\ast}(G)
     =(G+G^{\ast})E,\qquad G\in\C^{m\times m};
     \]
     \item[(iii)] the ``forward-backward operators" have the form
     \begin{align}
     \label{eq:forward-backward}
     \begin{aligned}
     &\Theta'\left[E\right]^{\ast} \left(\Theta(E)\right)
     =  E(E^{\ast}E)+(E^{\ast}E)E\\
     &\Theta'\left[E\right]^{\ast}\left(\Theta'\left[E\right](\delta E)\right)
     = 2E \left((\delta E)^{\ast} E\right) + 2\delta E (E^{\ast}E).
    \end{aligned}
     \end{align}
 \end{itemize}
 \end{lemma}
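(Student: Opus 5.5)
The plan is to treat $\C^{m\times(\rank\times\rank)}$ as the space of complex $m\times \rank^2$ matrices. Both it and $\C^{m\times m}$ carry the real Frobenius inner product $\langle A,B\rangle:=\re\Tr(B^{\ast}A)$. This is the right choice because, as in \Cref{lemma-Frechet diff}, $\Theta$ is not complex linear in $E$, so derivatives and adjoints must be taken over $\Rset$.

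\emph{Part (i).} I will expand
\[
\Theta(E+\delta E)=EE^{\ast}+E(\delta E)^{\ast}+(\delta E)E^{\ast}+(\delta E)(\delta E)^{\ast}.
\]
The middle two terms are real linear in $\delta E$. The remainder satisfies $\|(\delta E)(\delta E)^{\ast}\|_{\HS}\le\|\delta E\|_{\HS}^2=o(\|\delta E\|)$. This gives Fr\'echet differentiability with the stated derivative.

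\emph{Part (ii).} For $G\in\C^{m\times m}$ I will compute $\langle \Theta'[E]\delta E,G\rangle$ term by term using cyclicity of the trace. The first term is
\[
\re\Tr\bigl(G^{\ast}E(\delta E)^{\ast}\bigr)=\re\Tr\bigl((\delta E)^{\ast}G^{\ast}E\bigr)=\langle G^{\ast}E,\delta E\rangle.
\]
For the second term, $\Tr(G^{\ast}(\delta E)E^{\ast})=\Tr(E^{\ast}G^{\ast}\delta E)$, which is the complex conjugate of $\Tr((\delta E)^{\ast}GE)$. Taking real parts gives $\langle GE,\delta E\rangle$. Summing the two yields $\Theta'[E]^{\ast}G=(G+G^{\ast})E$. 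The only delicate point is remembering to take real parts, since the pairing is real.

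\emph{Part (iii).} I will insert into (ii) the two Hermitian matrices $G=\Theta(E)=EE^{\ast}$ and $G=\Theta'[E]\delta E=E(\delta E)^{\ast}+(\delta E)E^{\ast}$. For Hermitian $G$ one has $G+G^{\ast}=2G$. Hence, by associativity,
\[
\Theta'[E]^{\ast}\Theta(E)=2EE^{\ast}E=2E(E^{\ast}E).
\]
In the same way,
\[
\Theta'[E]^{\ast}\Theta'[E]\delta E=2E\bigl((\delta E)^{\ast}E\bigr)+2\,\delta E\,(E^{\ast}E),
\]
which is the second identity in \eqref{eq:forward-backward}. The point worth stressing is the bracketing: only $\rank^2\times\rank^2$ Gram-type products ever appear, so no $m\times m$ matrix is formed.

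There is no genuine obstacle here. The one point needing care is the first formula in \eqref{eq:forward-backward}. Read literally, $(E^{\ast}E)E$ is not dimensionally defined unless $m=\rank^2$. I will interpret, and prove, that formula as $E(E^{\ast}E)+E(E^{\ast}E)=2E(E^{\ast}E)$, which is what part (ii) forces.
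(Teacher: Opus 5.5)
Your proof is correct and follows the paper's route. The same expansion of $\Theta(E+\delta E)$ gives (i), and (ii)--(iii) are the direct computations the paper leaves as ``straightforward consequences''; your real pairing $\re\Tr(B^{\ast}A)$ is the right one, since $\Theta'[E]$ is only real linear.

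You are also right that the first identity in (iii) is a typo as printed, because $(E^{\ast}E)E$ is undefined unless $m=\rank^2$. The correct value is $2E(E^{\ast}E)$: the intended second term is $(EE^{\ast})E=E(E^{\ast}E)$, bracketed on the right so that no $m\times m$ matrix is formed.
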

  \begin{proof}
The first part follows from the expansion 
 $\Theta(E+\delta E) = 
 \Theta(E)+ E(\delta E)^{\ast}+(\delta E)E^{\ast} +
 (\delta E)(\delta E)^{\ast}$ and the fact that 
 $(\delta E)(\delta E)^{\ast} = \mathcal{O}(\|\delta E\|^2)$. 
 The other parts are straightforward consequences. 
 \end{proof}
 
The key point about the formulas in  \Cref{Theta-lemma} is that thanks to the bracketing 
in \eqref{eq:forward-backward} they completely avoid the formation  of 
$m\times m$ matrices.

If the covariance matrix $\bCov[u]$ is 
 of rank $\rank\ll m$, then it has a factorization 
 $\bCov[u] = VV^{\ast}$ where $V$ has $\rank$ columns and the forward operator corresponding to the correlation data reads as
 \[
F(f) := |B(f)B(f)^{\ast}|^{2} = \Theta(\tau(B(f)))\qquad \mbox{with} \qquad 
B(f) :=\mathcal{D} \diag(e^f)V, 
\]
and by the chain rule we can compute $F'[f]^{\ast}F'[f]$
using matrices no larger than $m\times \rank^2$. 

The forward operator corresponding to mean intensity data is given by 
\[
F_{\mathrm{mean}}(f) := \Diag|B(f)B(f)^{\ast}|^{2} = \Xi(B(f))
\qquad \mbox{with}\qquad 
\Xi(B):=\left(\sum\nolimits_{j=1}^{\rank}|B_{kj}|^2\right)_k.
\]
\begin{figure}
    \centering
    \includegraphics[width=1\linewidth]{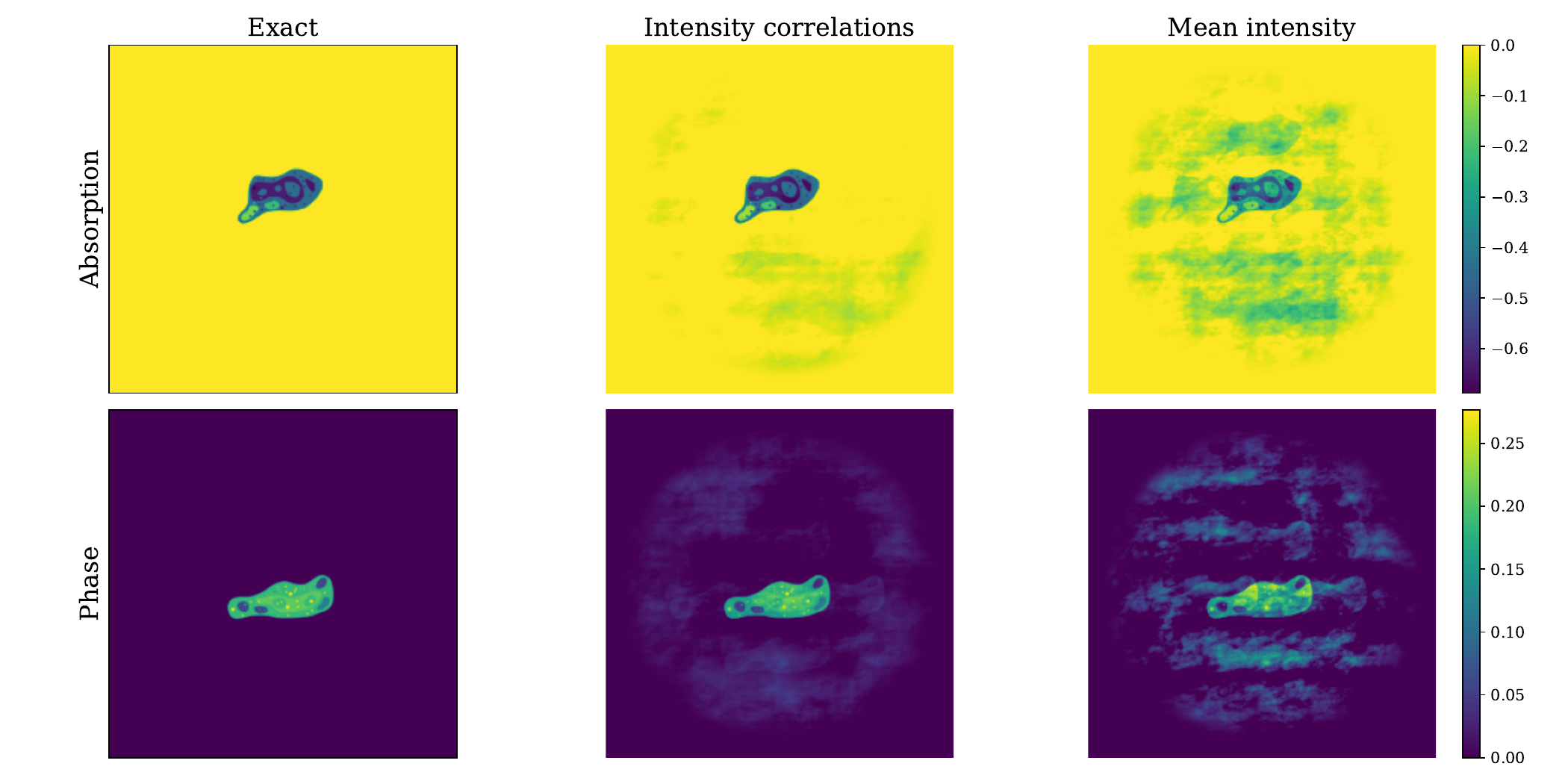}
    \caption{\small Comparison of holographic X-ray phase contrast imaging from intensity correlations and mean intensity for joint reconstructions of phase $\im f$ and absorption $-\re f$.}
    \label{fig:comparision_results}
\end{figure}
\begin{figure}
\centering
\includegraphics[width=\linewidth]{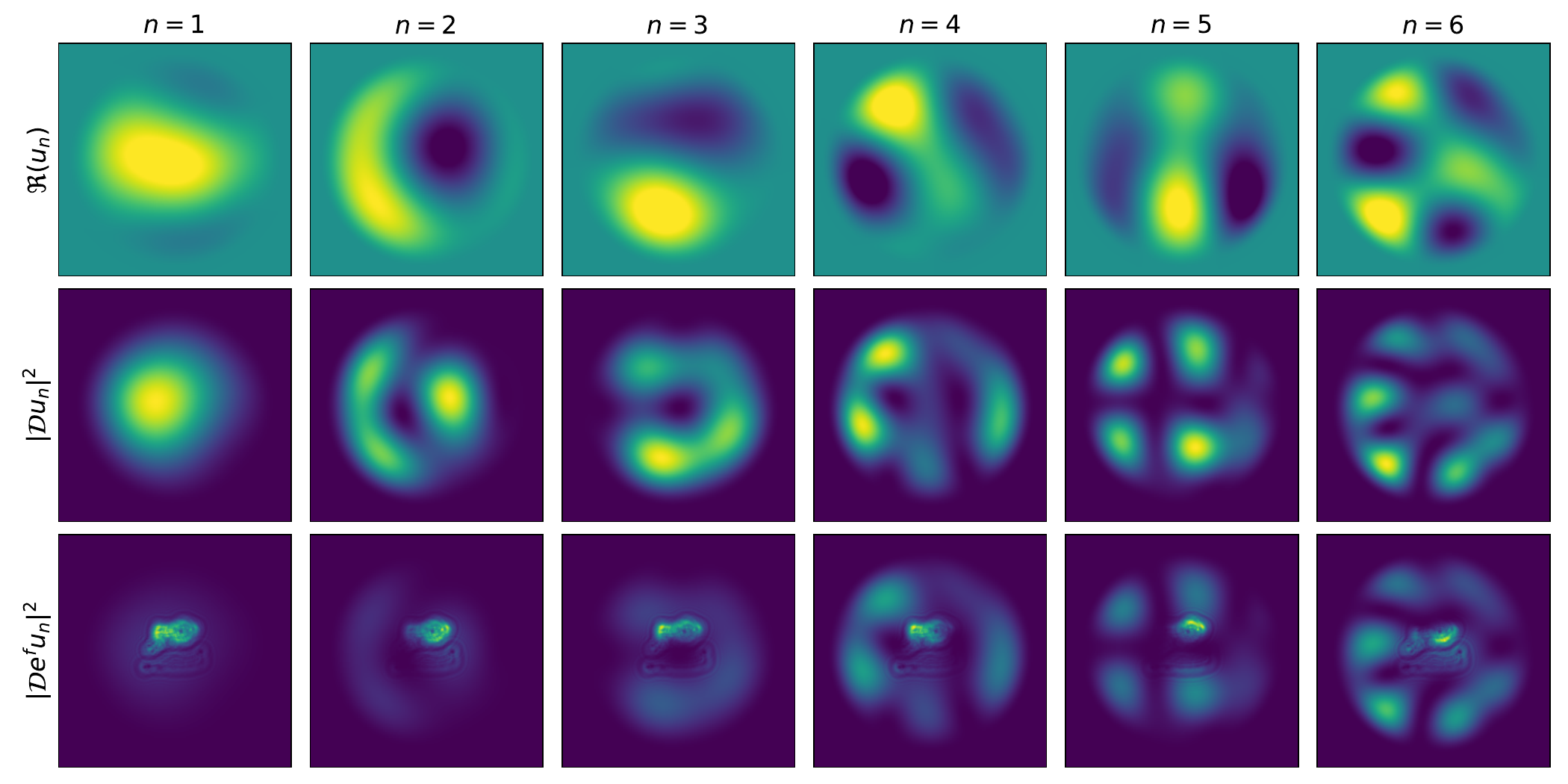}
{\caption{\label{fig:show_measurements}\small{ Visualization of the partial coherent incident beam and the fractionated data.}}}
\end{figure}
\subsection{Numerical implementation}
\subsubsection*{Software} 
All reconstructions were produced by the inverse problems python library ``RegPy"~\cite{regpy}. It provides tools to implement custom forward models as well as a variety of regularization methods and stopping rules. Further details can be found on Git \url{https://github.com/regpy/regpy}. 
\subsubsection*{Minimization of the Tikhonov functional}
The minimization problem in \cref{eq:Tikh-minimization prob} is solved by the generalized \emph{Fast iterative shrinkage-thresholding algorithm} (G-FISTA) \cite{nesterov1983method,beck2009fast}. To improve convergence and stability, we employ a homotopy (continuation) strategy in the regularization parameter $\alpha$, where the problem is solved sequentially for decreasing values of $\alpha$, each initialized with the previous iterate. We choose regularization parameters $\alpha_{k}=\alpha_{0} c^{k}$ for some $c\in (0,1)$ to decrease gradually with initial regularization parameter $\alpha_{0}$ for both intensity correlations and  mean intensity.

\subsubsection*{Reconstruction results}
\label{sec: reconstruction results}
We numerically analyze the performance of G-FISTA for holographic X-ray phase contrast imaging from both intensity correlations and mean intensity data.
To demonstrate the feasibility of this approach, we implement the forward operators corresponding to intensity correlations and mean intensity data for a two-dimensional cell test pattern with $256 \times 256$ pixels introduced in \cite{giewekemeyer2011x} as contrast $f$. 
To produce synthetic data reconstructions, we use a partially coherent beam and $N=3000$ frames. 
The regularization parameters are chosen according to $\alpha\in \{10^{-9}\cdot(\frac{1}{3})^k: k\in\Nset\}$. 
For mean intensity, the inversion with G-FISTA is started with an initial guess $f_{0}=0$, while for intensity correlations, a warm start from mean intensity reconstruction is chosen. 
For the holographic regime, a Fresnel number $\ff=\frac{10}{2\pi}$ is chosen for both forward operators.  

To construct $V \in \Cset ^{m\times r}$, we generate $r=4$ independent samples of smooth random fields as follows: we draw i.i.d. Gaussian Fourier coefficients, multiply them by a rapidly decaying spectral filter of the form $e^{-\frac{|\xi|^{2}}{\sigma^{2}}}$ with $\sigma=0.5$, and apply an inverse Fourier transform to obtain spatially smooth realizations.
A spatial cutoff is then applied to localize the fields.
The resulting samples are orthonormalized via a singular value decomposition to obtain the columns of $V$.

Throughout the regularization process, we apply the $L^{2}$- norm for both data fidelity and penalty term together with a non-negativity constraint for both phase and absorption contrasts. 
Additionally, the data are polluted with shot noise described by a Cox process with observation time $t=10^{9}$ photon counts per image.    

A comparison of the holographic X-ray phase contrast imaging from intensity correlations and mean intensity data is shown in \Cref{fig:comparision_results}. 
The joint reconstruction results for mean intensity are achieved using a 5-stage process with 100 G-FISTA iterations per stage, while for intensity correlations, we use a 2-stage process with 50 G-FISTA iterations per stage. 
The results show that intensity correlations enable simultaneous reconstruction of both phase and absorption contrasts, whereas mean intensities alone do not yield faithful reconstructions.
It appears that in the case of mean intensity data, some \emph{ghost-like} patterns produce artifacts in the reconstructions. 
The comparison demonstrates that a \emph{significant} amount of additional information can be achieved by the correlation data compared to the mean intensity data. 
Numerical simulations, complemented by preliminary theoretical results, suggest that the reconstructions become more accurate as the number of frames, $\Nfr $, increases. 
 
Additionally, we show in \Cref{fig:show_measurements} the 6 eigenvectors (or principal components) with the highest eigenvalues.
These components correspond to the dominant singular values and capture the principal variations of the input random fields, which is consistent with the stochastic nature of SASE pulses. Each component maximizes the variation of the random fields in a subspace, which is orthonormal to the previous components.
According to these results, there is a good agreement between theory and practice, as can be seen by \emph{all-at-once} phase and absorption reconstruction.    

\section{Conclusion and outlook}
 We have theoretically studied holographic X-ray imaging by exploring and exploiting the information content of the intensity correlations. Specifically, we proved unique identifiability for the nonlinear problem up to unavoidable 
 sources of non-uniqueness.

 For large-scale problems, the increased data dimensionality giving rise to a major difficulty when we compute the correlations from the pre-processing intensities. To remedy this we have avoided covariance fitting in the process of regularization by assuming  that the covariance matrix is \emph{known} and has a low rank. By \Cref{lemm:squared_modulus_factorization} and \Cref{Theta-lemma}, we showed that the quantitative holographic imaging can then effectively be interpreted as the application of forward-backward propagation, exploiting implicitly the full information content of correlation data. Moreover, the computational cost of the algorithm scales \emph{linearly} in the pixel number $m$ and \emph{quadratically} in the rank $\rank$ of $\bCov[u]$.

 Let us close this manuscript by introducing  possible directions for future research: One such direction is to replace the ``symmetry-breaking condition'' 
 \eqref{eq:ass_one_side} by different conditions that may be better suited for 
 specific applications. Furthermore, one could analyze the analogous, mathematically more 
 challenging problem for the full Helmholtz equation instead of the 
 Fresnel approximation, which might be necessary in different applications 
 with smaller wave numbers of the incident beam. 
 Another natural aim would be to quantify the stability of \cref{eq:nonlinear_inverse_problem} to find $f$ from observed correlations depending on the \textit{cross-covariance}, the \textit{Fresnel number}, and on the \textit{a-priori} information. A further direction is to extend our methodology to meet our forward problem when $\bCov[u]$ is \emph{unknown}. This is more applicable in real world problems. The idea is to first estimate the covariance matrix of the empty beam by modal decomposition of SASE pulses and then exploiting the information in an advanced method to get faithful reconstruction.    
\appendix 

\section{Circularly symmetric Gaussian random vectors}\label{complex-valued random variables}
Recall that a random variable $Z$ with values in $\Cset^m$ is called 
\emph{complex Gaussian} if $(\re Z,\im Z)$ is multivariate Gaussian 
in $\Rset^{2m}$ and \emph{circularly symmetric} if  
$e^{i\varphi}Z$ has the same distribution as $Z$ for all $\varphi\in \R$. 
Obviously, if the expectation of a circularly symmetric random vector 
exists, it must vanish. Let $Z$ be a circularly symmetric Gaussian 
random vector. Then $\E[ZZ^{\top}]=\E[e^{i\varphi}Ze^{i\varphi}Z^{\top}] 
= e^{2i\varphi} \E[ZZ^{\top}]$, so 
\begin{equation}\label{eq:aux_circ_symm}
\E[Z Z^{\top}]= 0\quad \mbox{if $Z$ is Gaussian and complex symmetric.}  
\end{equation}
In particular, if $Z$ is scalar, then taking the imaginary part of 
$\E[Z^2]=0$ shows that $\re Z$ and $\im Z$ are uncorrelated, 
which by Gaussianity implies that they are independent. 

If $X$ and $Y$ are circularly symmetric Gaussian random variables, then \emph{Isserlis' theorem} \cite{Isserlistheorem1918} yields the identity
\[
\E\left[X\overline{X}Y\overline{Y}\right]
= \E[X\overline{X}]\E[Y\overline{Y}] + \E[X\overline{Y}] \E[\overline{X}Y] + \E[XY]\E[\overline{X}\overline{Y}].
\]
The last term vanishes due to \eqref{eq:aux_circ_symm}, and the remaining terms can be rearranged to 
\begin{equation}\label{eq:Cov_circ_symm}
\Cov(|X|^2,|Y|^2)=|\Cov(X,Y)|^2.
\end{equation}
In particular, for $X=Y$ using $\E[X]=0$ we obtain
\begin{equation}\label{eq:E_Var}
    \Var(|X|^2) = \E[|X|^2]^2.
\end{equation}

\section{Cox processes}\label{app:Cox-process}
Given a random non-negative function (or measure) $I$ on a domain $\domY$, a 
Cox process $P$ with mean intensity $I$ is a point process $P=\sum_{i=1}^N \delta_{x_i}$ which, conditioned on $I$ is a Poisson process with intensity $I$. 
Here both the points $x_i\in\domY$ and the total number of points $N$ are random. 
For a more thorough characterization of Cox processes we refer to \cite{grandell:76}.
For a continuous function $f:\domY\to \Rset$, let 
\[
\langle P,f\rangle:=\sum\nolimits_{i=1}^N f(x_i).
\]
If $c_I(x,y):=\Cov(I(x),I(y))$ and $f,g:\domY\to \Rset$ 
are two continuous functions we have 
\[
\Cov\left(\langle P,f\rangle,\langle P,g\rangle\right)
= \int_{\domY}\int_\domY c_I(x,y)f(y)g(x)\,\mathrm{d}y\mathrm{d}x 
+ \int_{\domY} (\E[I])(x) f(x) g(x)\,\mathrm{d}x
\]
(see \cite{grandell:76} for the case of indicator functions $f,g$ which implies 
the above formula by density). In this sense we have 
\[
\Cov[P] = \Cov[I] + M_{\E[I]}.
\]
Note that $\Cov[I]$ is quadratic in $I$ whereas $\E[I]$ is linear in $I$. 
For the count rates considered in this paper we found the second term to 
be negligible compared to the first. 

Concerning uniqueness the correction term $M_{\E[I]}$ does not play a role 
since its Schwartz kernel is supported only on the diagonal $\{(x,x):x\in\domY\}$, 
which is a nullset in $\domY\times \domY$. 
\section*{Acknowledgments}
We would like to thank Tim Salditt for many helpful discussions. 
Financial support by Deutsche Forschungsgemeinschaft (DFG, German Research Foundation)
through Grant 432680300, SFB 1456— Mathematics of Experiments, project C03, is gratefully acknowledged.
\end{document}